\newtheorem{newthm}{Theorem}
\newtheorem{theorem}{Theorem}[section]
\newtheorem{lemma}[theorem]{Lemma}
\newtheorem{proposition}[theorem]{Proposition}
\newtheorem{corollary}[theorem]{Corollary}
\newtheorem{definition}[theorem]{Definition}
\theoremstyle{remark}
\theoremstyle{plain}
\numberwithin{equation}{section}
\newcommand{\on}{\operatorname}
\def\CCC{{\cal C}}
\def\EEE{{\cal E}}
\def\CCC{{\cal C}}
\def\QQQ{{\cal Q}}
\def\VVV{{\cal V}}
\def\smm{\smallsetminus}
\def\C{\mbox{$\mathbb C$}}
\def\T{\mbox{$\mathbb T$}}
\def\D{\mathbb D}
\def\Z{\mbox{$\mathbb Z$}}
\def\Q{\QQQ}
\def\N{\mbox{$\mathbb N$}}
\def\lv{ \left(\begin{matrix} }
 \def\rv{\end{matrix}\right)}
\def\eqdef{:=}
\def\cal{\mathcal}
\def\dz{{\ \rm{d}z}}
\def\dw{{\dw}}
\def\ds{\displaystyle}
\newcommand{\mylabel}[1]{\label{#1}}
\newcommand{\REFEQN}[1] { \begin{equation}\mylabel{#1} }
\newcommand{\ENDEQN}{\end{equation}}
\newcommand{\REFTHM}[1] { \begin{theorem}\mylabel{#1} }
\newcommand{\ENDTHM}{\end{theorem}}
\newcommand{\REFNTH}[1] { \begin{newthm}\mylabel{#1} }
\newcommand{\ENDNTH}{\end{newthm}}
\newcommand{\REFPROP}[1]{\begin{proposition}\mylabel{#1} }
\newcommand{\ENDPROP}{\end{proposition} }
\newcommand{\REFLEM}[1]{\begin{lemma}\mylabel{#1} }
\newcommand{\ENDLEM}{\end{lemma} }
\newcommand{\REFCOR}[1]{\begin{corollary}\mylabel{#1} }
\newcommand{\ENDCOR}{\end{corollary} }
\def\smm{ {\smallsetminus }}
\def\ds{\displaystyle }
\def\mystrut{{\rule[-2ex]{0ex}{4.5ex}{}}}
\def\ov{\overline}
\def\T{{\mathbb T}}
\begin{document}

\title{\large Preperiodic dynatomic curves for $z\mapsto z^d+c$}
\author{Gao Yan}

\maketitle

\begin{abstract}
The preperiodic dynatomic curve $\mathcal{X}_{n,p}$ is the closure in $\C^2$ of the set consisting of $(c,z)$ such that $z$ is a preperiodic point of the polynomial $z\mapsto z^d+c$ with preperiod $n$ and period $p$ ($n,p\geq1$).
We prove that each $\mathcal{X}_{n,p}$ has  exactly $d-1$ irreducible components,   these components are all smooth and have pairwise transverse intersections  at the  singular points of $\mathcal{X}_{n,p}$. We also compute the genus of each component and  the Galois group of the defining polynomial of $\mathcal{X}_{n,p}$.

\vspace{0.1cm}
{\bf Keywords and phrases}: dynatomic curves, smooth, irreducible, Multibrot set, Galois group, complex dynamics.

\vspace{0.1cm}

{\bf AMS(2010) Subject Classification}: 14H50, 37F45.
\end{abstract}

\section{Introduction}

Fix $d\ge 2$. For $c\in \C$, set $f_c(z)=z^d+c$. For $p\geq 1$, define
 $$\check{\mathcal{X}}_{0,p}:=\bigl\{(c,z)\in \C^{2}\mid f^{p}_{c}(z)=z \text{ and for all } 0<k< p,\ \ f^{k}_c(z)\ne z\bigr\}.$$
$$\mathcal{X}_{0,p}:=\text{the closure of $\check{\mathcal{X}}_{0,p}$ in $\C^2$ }.$$

It is  known that all $\mathcal{X}_{0,p}$ are affine algebraic curves, called the {\it periodic\ dynatomic curves}. These curves have been the subject of several studies in algebraic and holomorphic
dynamical systems. The known results for these curves mainly include the smoothness (Douady-Hubbard \cite{DH1}, Milnor \cite{Mil1}, Buff-Tan \cite{BT}); irreducibility (Bousch \cite{B}, Buff-Tan \cite{BT}, Morton \cite{Mo}, Lau-Schleicher \cite{LS}, Schleicher \cite{S}); the genus (Bousch \cite{B}) and the associated Galois groups (Bousch \cite{B}, Morton \cite{Mo}, Lau-Schleicher \cite{LS}, Schleicher \cite{S}).

In the present work, we study  some topological and algebraic properties of {\it preperiodic\ dynatomic\ curves}.

\begin{definition}
For $n\geq0,\ p\geq1$, a point $z$ is called a  $p$-periodic point if $f^p_c(z)=z$ but $f^k_c(z)\not=z$ for $0<k<p$, and an $(n,p)$-preperiodic point of $f_c$ if $f_c^n(z)$ is a $p$-periodic point of $f_c$ but $f_c^l(z)$ is not periodic for any $0\leq l<n$.
\end{definition}

Now, for any $n\geq1, p\geq 1$, define
$$\check{\mathcal{X}}_{n,p}=\bigl\{(c,z)\in\C^2\big |z \text{ is a }(n,p)\text{-preperiodic point of }f_c \bigr\}$$
$$\mathcal{X}_{n,p}:=\text{ the closure of $\check{\mathcal{X}}_{n,p}$ in $\C^2$}\ .$$

 In fact, as we shall see below, all $\mathcal{X}_{n,p}$ are also  affine algebraic curves,  called the $preperiodic\ dynatomic\ curves$.
 Limited work has been done for this kind of curves. The special case $d=2$ has been previously studied by Bousch \cite{B}, who
established  in this case that for any integers $n,p\geq1$, the curve $\mathcal{X}_{n,p}$ is also smooth and irreducible  (as the periodic dynatomic curves), and computed its  associated Galois group.

The main purpose of this work is to extend these results to arbitrary $d\geq2$. An obvious difference with the previous
case is that, for $d>2$, the curve $\mathcal{X}_{n,p}$ is no longer irreducible: it consists of $d-1$ irreducible
components. We may understand this by a simple observation.  Consider the curve $\mathcal{X}_{1,p}$ of $(1,p)$-preperiodic points, that
is, the points $z$ which are not p-periodic, but whose image $z_0=f(z)$ is. The periodic point
$z_{p-1}=f^{p-1}(z_0)$ is another preimage of $z_0$. Because $f_c(z)=z^d+c$, we have $z=\omega z_{p-1}$, where  $\omega$ is a $d$-th root of unity. According to the value of $\omega$, we
can partition the $(1,p)$-preperiodic points into $d-1$ classes, and this decomposition is of
algebraic nature: it corresponds to a factorization of $f^{p+1}_c(z)-f_c(z)$.

We  show that these $d-1$ components are smooth and irreducible.  Our approach to smoothness is by using elementary calculations on quadratic differentials and Thurston's  contraction principle, following the method of Buff-Tan (\cite{BT}). The approach to irreducibility is based on the connectedness of periodic dynatomic curves and then by an induction on the preperiodic index $n$.  Moreover, we  study the features of the singular points of $\mathcal{X}_{n,p}$.

 Following  Bousch, we compute the genus of each irreducible component and the associated Galois group of the curve $\mathcal{X}_{n,p}$.

Here is a list our main results. They are to be compared with results of periodic dynatomic curves.

Denote by  $\{\nu_d(p)\}_{p\geq 1}$ the unique sequence of positive integers satisfying the recursive relation
 \begin{equation}\label{nu}
  d^p=\sum_{k|p} \nu_d(k),\quad \text{integer }d\geq 2
  \end{equation}
and let $\varphi(m)$ be the Euler totient function (i.e. the number of positive integers less than $m$ and co-prime to $m$).
 For $n,p\geq1$, define the numbers $$M_{n,p}:=\nu_d(p)d^{n-2}(d-1)\big(n-1-\sum_{t=1}^{[\frac{n-1}{p}]}d^{-tp}\big),$$ where $[x]$ denotes the maximal integer less than or equal to $x$, and
\[K_{n,p}:=\nu_d(p)(d^{p-1}-1)d^{n-1-p}\big(\sum_{t=1}^{[\frac{n-1}{p}]-1} d^{-t(p-1)}-\sum_{t=1}^{[\frac{n-1}{p}]-1}d^{-pt}\big)+(d^{[\frac{n-1}{p}]}-1)\nu_d(p)d^{n-2-[\frac{n-1}{p}]p}\]
(one can refer to (\ref{number1}) and (\ref{number2}) for the computation of them).
For $n,p\geq 1$, set
$$g_p(d)= 1+\dfrac{dp-d-p-1}{2d} \nu_d(p)-\dfrac{d-1}{2d} \sum_{k|p, k<p} \varphi\Big(\dfrac p k\Big) k\cdot \nu_d(k),$$
\[g_{n,p}=1+\dfrac{1}{2}\nu_d(p)d^{n-2}(pd-d-p-1)+\dfrac{1}{2}(M_{n,p}+K_{n,p})-\frac{1}{2}d^{n-2}(d-1)\sum_{k|p,k<p} \varphi\Big(\dfrac p k\Big) k\cdot \nu_d(k).\]

\begin{theorem}
For any $d\geq 2,\ n,p\geq1$,  the preperiodic dynatomic curve $\mathcal{X}_{n,p}$ has the following properties :
\begin{enumerate}
\item The set $\mathcal{X}_{n,p}$ is an affine algebraic curve. It has $d-1$ irreducible components and each one is smooth. Moreover,  these components  are pairwise intersecting
at the singular points of $\mathcal{X}_{n,p}$. In particular, if $d=2$, the curve $\mathcal{X}_{n,p}$ is smooth and irreducible.
\item The genus of every irreducible component of $\mathcal{X}_{n,p}$ (in some kind of compactification) is $g_{n,p}(d)$,
  and all irreducible components are mutually homeomorphic.
\item The Galois group associated with $\mathcal{X}_{n,p}$ is the same as that associated with $\mathcal{X}_{\leq n,p}:=\cup_{l=0}^n\mathcal{X}_{l,p}$, which consists of all permutations on the roots of the defining polynomial of $\mathcal{X}_{\leq n,p}$ that commute with $f_c$ and the rotation of argument $1/d$.
\end{enumerate}
\end{theorem}

Here is a tableau comparing these various curves,  where $\mathbf{S}_m$ denotes the group of permutations on $\{1,\ldots,m\}$ and $G_{n,p}(d)$ is the associated Galois group of $\mathcal{X}_{n,p}$.

\begin{center}
\begin{tabular}{|c|| c | c |c}
\hline
periodic $\mathcal{X}_{0,p}$ & $d=2 $ & $d> 2$\mystrut \\ \hline \hline
& irreducible & irreducible \mystrut \\ \hline
& smooth & smooth \mystrut \\ \hline
genus & $g_p(2)$ &   $g_p(d)$  \mystrut \\ \hline
Galois group \mystrut &$ \mathbf{S}_{\nu_2(p)/p}\ltimes\Z_p^{\nu_2(p)/p}$&$ \mathbf{S}_{\nu_d(p)/p}\ltimes \Z_p^{\nu_d(p)/p}$\\
\hline
\end{tabular}
\end{center}
\vspace{0.5cm}

\begin{tabular}{|c|| c | c|c }
\hline
preperiodic $\mathcal{X}_{n,p}$, $n\ge 1$ & $d=2 $ & $d> 2$\mystrut \\ \hline \hline
& irreducible & $d-1$ irreducible components \mystrut \\ \hline
& smooth & not smooth, but each component is smooth \mystrut \\ \hline
component-wise genus &  $g_{n,p}(2)$ &   $g_{n,p}(d)$  \mystrut \\ \hline
Galois group\mystrut  &$G_{n,p}(2)$ &$G_{n,p}(d)$  \mystrut \\ \hline
pairwise intersection &empty&$C_{n,p}(\text{singular}):$ singularity set of $\mathcal{X}_{n,p}$ \mystrut \\
\hline
\end{tabular}

\vspace{0.2cm}

This manuscript is organized as follows:

In section $2$, we summarize some preliminaries  that will be used in this paper.

In section $3$, we will prove that every $\mathcal{X}_{n,p}$ is an affine algebraic curve and find its defining polynomial.

In section $4$, we give the irreducible factorization of $\mathcal{X}_{n,p}$, and prove that each irreducible factor is smooth and these
irreducible components are pairwise  intersecting at the singular points of $\mathcal{X}_{n,p}$.

In section $5$, we  calculate the genus of each irreducible component.

In section $6$, we  describe $\mathcal{X}_{n,p}$ from the algebraic point of view  by calculating its associated Galois group.

Acknowledgement. I thank Tan Lei for helpful discussions and suggestions.

\section{Preliminaries}
{\bf 1. Filled in Julia set and Multibrot set.}
These material can be found in \cite{DH1,DH2} and \cite{DE}.

For $c\in \C$, we denote by $K_c$ the filled-in Julia set of $f_c$, that is the set of points $z\in \C$ whose orbit under $f_c$ is bounded. We denote by $M_{d}$ the {\it Multibrot\ set}
in the parameter plane, that is the set of parameters $c\in \C$ for which the critical point $0$ belongs to $K_c$. It is known that $M_d$ is connected.

Assume $c\in M_{d}$. Then $K_c$ is connected. There is a conformal isomorphism $\phi_c:\C\smm \overline K_c\to \C\smm \overline \D$ satisfying $\phi_c\circ f_c=  \big(\phi_c\big)^{d}$ and $\phi_c^\prime(\infty)=1$ (i,e. $\dfrac{\phi_c(z)}{z}\longrightarrow_{z\to \infty} 1 $). The dynamical ray of angle $\theta\in \T$ is defined by
\[R_c(\theta):=\bigl\{z\in \C\smm K_c\mid \arg\bigl(\phi_c(z)\bigr)=2\pi\theta\bigr\}.\]

Assume $c\notin M_{d}$. Then $K_c$ is a Cantor set and  all periodic points of $f_c$ are {\it repelling}, that is $|(f^p)^\prime(z)|>1$ for $p\geq1$ and all $p$-periodic point $z$. There is a conformal isomorphism $\phi_c:U_c\to V_c$ between neighborhoods of $\infty$ in $\C$, which satisfies  $\phi_c\circ f_c= \big( \phi_c\big)^{d}$ on $U_c$. We may choose $U_c$ so that $U_c$ contains the critical value $c$ and  $V_c$ is the complement of a closed disk.  For  each $\theta\in \T$,  there is an infimum $r_c(\theta)\geq 1$ such that $\phi_c^{-1}$ extends analytically along $R_0(\theta)\cap \bigl\{z\in \C\mid r_c(\theta)<|z|\bigr\}$. We denote by $\psi_c$ this extension and by $R_c(\theta)$ the dynamical ray
\[R_c(\theta):=\psi_c\Big(R_0(\theta)\cap \bigl\{z\in \C\mid r_c(\theta)<|z|\bigr\}\Big).\]
As $|z|\searrow r_c(\theta)$, the point $\psi_c(r{\rm e}^{2\pi i\theta})$ converges to a point $x\in \C$ (\cite[Pro.8.3]{DH2}). If $r_c(\theta)>1$, then $x\in \C\smm K_c$ is an iterated preimage of $0$ and we say that $R_c(\theta)$ bifurcates at $x$. If $r_c(\theta)=1$, then $x$ belongs to $K_c$ and we say that $R_c(\theta)$ lands at $x$.

There are three kinds of important parameters in $M_d$: super-attracting, parabolic, and Misiurewicz parameters. Recall that a point $z$ is said to be $p$-periodic if $f^p_c(z)=z$ but $f^k_c(z)\not=z$ for $0<k<p$. We call $c\in\C$
\begin{itemize}
\item a {\it $p$-super-attracting parameter} if $0$ is $p$-periodic by $f_c$;
\item a {\it $p$-parabolic parameter} if $f_c$ has a $p$-periodic point $z_0$ with $(f^p)^\prime(z_0)=1$ or a $m$-periodic point $z_0$ such that $m\mid p$ and $(f^m)^\prime(z_0)$ is a $\dfrac{p}{m}$-th root of unity;
\item a $(n,p)$-{\it Misiurewicz parameter} if $0$ is a $(n,p)$-preperiodic point of $f_c$.
\end{itemize}

A well-known result in complex dynamics says that any parabolic cycle of a rational map has a critical point in its basin, whose orbit eventually converges to , but is disjoint with the cycle (see \cite[Thm.10.15]{Mil2}).
So for the family of  unicritical polynomials $\{f_c\mid c\in\C\}$, the three classes of parameters above are pairwise disjoint. We write this point as a lemma, since it will  be repeatedly used throughout the paper.
\begin{lemma}\label{specificity}
If the critical point $0$ is (pre)periodic for $f_c$, then $c$ is not a parabolic parameter.
\end{lemma}

{\noindent\bf 2. Affine algebraic curve and singularity.} These material can be found in \cite{G}.

A polynomial $f\in\C[x,y]$ is called {\it squarefree} if it is not divisible by $h(x,y)^2$ for any non-constant $h(x,y)\in\C[x,y]$.
An {\it affine algebraic curve over} $\C$ is defined as
\[\CCC=\{(x,y)\in\C^2\mid f(x,y)=0\}\]
where $f$ is a non-constant squarefree polynomial in $\C[x,y]$, called the {\it defining polynomial} of $\mathcal{C}$.
If $f=\prod_{i=1}^m f_i$, where $f_i$ are the irreducible factor of $f$,
we say that the affine curve defined by $f_i$ is a {\it irreducible component} of $\mathcal{C}$.

Let $f\in\C[x,y]$. The {\it total degree} of $f(x,y)$  as a multivariate polynomial is the highest degree of its terms, denoted by Deg$(f)$. Correspondingly, we denote by deg$_x(f)$ and deg$_y(f)$ the degrees of $f$ when considered as a polynomial in the variable $x$ and $y$ respectively. The following lemma is repeatedly used in this paper.
\begin{lemma}\label{degree}
\begin{description}
\item [(1)] If $f=f_1f_2$ with $f_1,f_2\in \C[x,y]$, then Deg$(f)=$ Deg$(f_1)+$Deg$(f_2)$, deg$_x(f)=$ deg$_x(f_1)+$deg$_x(f_2)$ and deg$_y(f)=$
deg$_y(f_1)+$deg$_y(f_2)$.
\item [(2)] For $f_1,f_2\in\C[x,y]$, if $f(x,y)=f_1(x,f_2(x,y))$, then $\text{deg}_y(f)=\text{deg}_y(f_1)\cdot \text{deg}_y(f_2)$.
\item [(3)] For $f_1,f_2\in\C[x,y]$, if $f(x,y)=f_1(x,f_2(x,y))$ and $\text{Deg}(f_1)=\text{deg}_y(f_1)\geq1$, $\text{Deg}(f_2)>1$, then $\text{Deg}(f)=\text{Deg}(f_1)\cdot \text{Deg}(f_2)$.
\end{description}
\end{lemma}
\begin{proof}
(1). Refer to \cite[section 1.1]{F}.

\noindent (2). It is straightforward by a simple computation.

\noindent (3). Set $d_1:=\text{Deg}(f_1)$ and $d_2:=\text{Deg}(f_2)$. According to the conditions of the lemma, we have $\text{deg}_y(f_1)=d_1\geq1$ and $d_2>1$. On one hand, since $\text{Deg}(f_1)=\text{deg}_y(f_1)=d_1$, then there is a unique term in $f_1$ with the form $a_1y^{d_1}$, where $a_1$ is a non-zero constant. So, by (1) and $d_1\geq1$, it follows that
$\text{Deg}(a_1f_2^{d_1})=d_1d_2$. On the other hand, any other term of $f_1$ has the form  $ax^sy^t$, where $a$ is a non-zero constant and either $s+t<d_1$ or $s+t=d_1$ and $s\geq1$. According to point (1) and $d_2>1$,
\[\text{Deg}(x^sf_2^t(x,y))=s+td_2<  d_1d_2.\]
 So we get $\text{Deg}(f)=d_1d_2$.
\end{proof}

Let $\mathcal{C}$ be an affine algebraic curve for $\C$ defined by $f\in\C[x,y]$, and let $P=(a,b)\in\mathcal{C}$. The multiplicity of $\mathcal{C}$ at $P$, denoted by  $mult_P(\mathcal{C})$, is defined as the order $s$ of the first non-vanishing term in the Taylor expansion of $f$ at $P$, i.e.
\[f(x,y)=\sum_{s=0}^{\infty}\frac{1}{s!}\sum_{t=0}^{s}{s\choose t}(x-a)^t(y-b)^{s-t}\frac{\partial^sf}{\partial x^t\partial y^{s-t}}(a,b).\]
If $mult_P(\mathcal{C})=1$, the point $P$ is called a {\it smooth point} of $\mathcal{C}$. If  $mult_P(\mathcal{C})=r>1$, then we say that $P$ is a {\it singular point of multiplicity $r$}. We say that $\mathcal{C}$ or
$f$ is $smooth$ if any point on $\mathcal{C}$ is smooth. Note that the first non-vanishing term is a homogeneous polynomial about $x-a$ and $y-b$, so all its irreducible factors are linear and they are called the $tangents$ of $\mathcal{C}$ at $P$.

A singular point $P$ of multiplicity $r$ on an affine plane curve $\mathcal{C}$ is called {\it ordinary} if the $r$ tangents to $\mathcal{C}$ at $P$ are distinct.

The following result provides a topological interpretation of the irreducibility of polynomials.
\begin{lemma}\label{relation}
A squarefree polynomial $f\in\C[x,y]$ is irreducible if and only if the set of  smooth points of $f$ is connected.
\end{lemma}

{\noindent \bf 3. Periodic dynatomic curves.}

In this paper, some of proofs and statements rely on  the work of the periodic curves $\mathcal{X}_{0,p}$. We list the related  results  in the following lemma.  Its proof  can be found in \cite{B}, \cite{BT}, \cite{DE}, \cite{GO}, \cite{LS}, \cite{Mil1}, \cite{S}.

By abuse of notation, we will identify  polynomials in
$\C[c,z]$ as  polynomials in $\mathbf{C}[z]$ with $\mathbf{C}=\C[c]$. Denote by $\mathbf{K}$ a fixed algebraically closed field containing $\mathbf{C}$.

Let $f\in\C[c,z]$.  By the {\it zeros of} $f\in\C[c,z]$, we mean the points $(c,z)\in\C^2$ with $f(c,z)=0$. By the {\it roots of $f\in\mathbf{C}[z]$}, we mean the roots of $f$ in $\mathbf{K}$ when it is considered as a polynomial in $\mathbf{C}[z]$.

Recall that $\{\nu_d(p)\}_{p\geq1}$ is a unique sequence of positive integers satisfying the recursive relation $d^p=\sum_{k|p}\nu_d(k)$, $\text{Deg}(f)$ denotes the total degree of $f$ and $\text{deg}_z(f)$ denotes the degree of $f$ as a polynomial in $\mathbf{C}[z]$.

\begin{lemma} \label{period}

Let $\mathcal{X}_{0,p}$ be a periodic dynatomic curve. Then

\begin{enumerate}
\item \cite{B,BT} There exists a unique sequence of monic polynomials $\{Q_{0,p}\in\mathbf{C}[z]\}_{p\geq1}$ such that  for all $p\geq1$,
\[\Phi_{0,p}(c,z):=f^{\circ p}_c(z)-z=\prod_{k|p}Q_{0,k}(c,z).\]
Moreover, we have $\text{Deg}(Q_{0,p})=\text{deg}_z(Q_{0,p})=\nu_d(p)$.

\item \cite{BT} Let $c_0$ be an arbitrary parameter. Then a point $z_0$ is a root of $Q_{0,p}(c_0,z)\in\C[z]$ if and only if one of the three exclusive conditions is satisfied:\label{12}
\begin{description}
\item[(1)] $z_0$ is a $p$-periodic point of $f_{c_0}$  and $[f_{c_0}^{\circ p}]^\prime(z_0)\ne1$,
\item[(2)] $z_0$ is a $p$-periodic point of $f_{c_0}$ and $[f_{c_0}^{\circ p}]^\prime(z_0)=1$,
\item[(3)] $z_0$ is an $m$-periodic point of $f_{c_0}$, where $m$ is a proper factor of $p$, and $[f_{c_0}^{\circ m}]^\prime(z_0)$ is a primitive $\frac{p}{m}$-th  root of unity\label{(3)}.
\end{description}
\item \cite{B,BT,GO,LS,S} The polynomial $Q_{0,p}$ is smooth and irreducible for all $p\geq 1$ and
\[\mathcal{X}_{0,p}=\{(c,z)\in\C\ |\ Q_{0,p}(c,z)=0\}.\]
\item \cite{B,BT,GO} The projection $\pi_{0,p}: \mathcal{X}_{0,p}\longrightarrow\C$, defined by $\pi_{0,p}(c,z)=c$, is a degree $\nu_d(p)$ (given in (\ref{nu})) branched covering with two kinds of critical points:

\begin{description}
\item[(1)] $C_{0,p}\text{(primitive)}=\{(c,z)\in\mathcal{X}_{0,p}\mid(c,z)\text{ satisfies condition }(2) \text{ of point } \ref{12}\}$. In this case,  $(c,z)$ is a simple critical point.
\item[(2)] $C_{0,p}\text{(satellite)}=\{(c,z)\in\mathcal{X}_{0,p}\mid(c,z)\text{ satisfies condition }(3) \text{ of point } \ref{12}\}$. In this case,
the multiplicity of the critical point $(c,z)$ is $\frac{p}{m}-1$.
\end{description}
The critical value set of $\pi_{0,p}$   consists of the parabolic parameters of period $p$.
\item \cite{DE,Mil1} The projection $\varpi_{0,p}:\mathcal{X}_{0,p}\longrightarrow \C$, defined by $\varpi_{0,p}(c,z)=z$, is a degree $\nu_d(p)/d$ branched covering, which is injective near each point $(c_0,0)\in\mathcal{X}_{0,p}$.
\item \cite{B} The Galois group $G_{0,p}$ for the polynomial $Q_{0,p}\in \mathbf{C}[z]$ consists of  the permutations on roots of $Q_{0,p}\in\mathbf{C}[z]$ that  commute with  $f_c$.
\end{enumerate}

\end{lemma}

\section{The defining polynomial for $ \mathcal{X}_{n,p}$}\label{3}

The objective of this section is to show that $\mathcal{X}_{n,p}$  is an affine algebraic curve and find its defining polynomial.

Recall that $\mathbf{C}$ denotes the ring $\C[c]$. For $n\geq0,p\geq1$, set $\Phi_{n,p}(c,z)=f_c^{\circ (n+p)}(z)-f_c^{\circ n}(z)$.
\begin{lemma}\label{squarefree}
The polynomial $\Phi_{n,p}\in\mathbf{C}[z]$ has no repeated roots. Consequently, it is squarefree.
\end{lemma}
\begin{proof}
To prove this lemma, it is enough to show that there exists $c_0\in\C$ such that all roots of $\Phi_{n,p}(c_0,z)$ are simple. In fact, given $c_0\in\C\setminus M_d$, a point $z_0$ is a root of $\Phi_{n,p}(c_0,z)\in\mathbb{C}[z]$ if and only if $z_0$ is a $(l,k)$-preperiodic point of $f_{c_0}$, where $0\leq l\leq n$ and $ k\mid p$. For such a $c_0$, the critical point $0$ goes to infinity and all periodic points of $f_{c_0}$ are repelling. It follows that
\[(\partial \Phi_{n,p}/\partial z)(c_0,z_0)=[f_{c_0}^{\circ n}]^\prime(z_0)\bigl([f_{c_0}^{\circ p}]^\prime(z_0)-1\bigr)\ne0,\]
which completes the proof.
\end{proof}

\begin{lemma} \label{polynomial}
There exists a unique double indexed sequence of  squarefree, monic polynomials $\{Q_{n,p}\in\mathbf{C}[z]\}_{n,p\geq1}$, such that  for all $n,p\geq 1$,
\begin{equation}\label{e1}\Phi_{n,p}(c,z)=\Phi_{n-1,p}(c,z)\prod_{k|p}Q_{n,k}(c,z).\end{equation}
Moreover, we have $\text{Deg}(Q_{n,p})=\text{deg}_z(Q_{n,p})=\nu_d(p)(d-1)d^{n-1}$.
\end{lemma}

\begin{proof}
The definition of $\{Q_{n,p}\}_{n,p\geq1}$ is based on the polynomials $\{Q_{0,p}\}_{p\geq1}$ which appear in part 1 of Lemma \ref{period}. We firstly show that $Q_{0,p}(c,z)$ divides $Q_{0,p}(c,f_c(z))$ for any $p\geq1$.
Since the polynomials $Q_{0,p}(c,f_c(z))\in \mathbf{C}[z]$ are monic, we may perform a Euclidean division to find  a monic quotient $Q\in\mathbf{C}[z]$ and a remainder $R\in\mathbf{C}[z]$ with degree$(R)<$degree$(Q_{0,p})$, such that $Q_{0,p}(c,f_c(z))=Q_{0,p}Q+R$. We need to show that $R=0$, which enable us to set $Q_{1,p}(c,z):=Q$.

Following Lemma \ref{squarefree} and part 1 of Lemma \ref{period}, the polynomial $Q_{0,p}\in\mathbf{C}[z]$ does not have repeated factors. So its discriminant $\Delta_{0,p}\in \mathbb{C}[c]$ does not identically vanish, and hence $\Delta_{0,p}(c)\not=0$ outside a finite set.
Fix $c_0\in\C$ such that $\Delta_{0,p}(c_0)\not=0$. Then any root $z_0$ of $Q_{0,p}(c_0,z)$ is simple. By part 2 of Lemma \ref{period}, the point $z_0$ is  also a root of $Q_{0,p}(c_0,f_{c_0}(z))$. As a consequence, $R(c_0,z)=0$ for all $z\in\C$. Since this is true for every $c_0$ outside a finite set, we have $R=0$ as required.

For $n,p\geq1$, we define $Q_{n,p}(c,z):=Q_{1,p}(c,f^{n-1}_c(z))$. It is clear that each $Q_{n,p}\in\mathbf{C}[z]$ is monic. Note that   $\Phi_{n,p}(c,z)=\Phi_{0,p}(c,f^n_c(z))$ for any $n,p\geq1$, then we have
\begin{eqnarray*}
\Phi_{n,p}(c,z)&=&\Phi_{0,p}(c,f^n_c(z))\overset{Lem.\ref{period}}{=}\prod_{k|p}Q_{0,k}(c,f^n_c(z))=\prod_{k|p}Q_{0,k}(c,f_c^{n-1}(z))Q_{1,k}(c,f^{n-1}_c(z))\\
&=&\prod_{k|p}Q_{0,k}(c,f^{n-1}_c(z))\prod_{k|p}Q_{1,k}(c,f^{n-1}_c(z))=\Phi_{0,p}(c,f^{n-1}_c(z))\prod_{k|p}Q_{n,k}(c,z)\\
&=&\Phi_{n-1,p}(c,z)\prod_{k|p}Q_{n,k}(c,z).
\end{eqnarray*}
Since each $\Phi_{n,p}$ is squarefree (Lemma \ref{squarefree}), so is each $Q_{n,p}$.

Repeatedly applying (2) and (3) of Lemma \ref{degree}, we have $\text{Deg}(f^k_c(z))=\text{deg}_z(f^k_c(z))=d^k$ for $k\geq1$. It follows that
$\text{Deg}(\Phi_{n,p})=\text{deg}_z(\Phi_{n,p})=d^{n+p}$ for $n\geq0,p\geq 1$.  Then by the recursive formulas (\ref{e1}), (\ref{nu}) and  (1) of Lemma \ref{degree}, the degree conclusion in the lemma holds.
\end{proof}

By the definition of $Q_{n,p}$,  we get the inductive formulas
\begin{equation}\label{induction}
\left\{
  \begin{array}{ll}
    Q_{n-1,p}(c,f_c(z))=Q_{n,p}(c,z), & n\geq2;\\
    Q_{0,p}(c,f_c(z))=Q_{0,p}(c,z)Q_{1,p}(c,z), & n=1.
  \end{array}
\right.
\end{equation}
for each $p\geq1$. This equation implies that we can obtain the properties of $Q_{n,p}$
by induction on $n$.

In fact, $Q_{n,p}(c,z)$ is the defining polynomial of $\mathcal{X}_{n,p}$. To see this, we will now study the
properties of the roots of $Q_{n,p}(c_0,z)\in\C[z]$ for an arbitrary parameter $c_0\in\C$.
\begin{proposition}\label{define}
Let $n\geq1,\ p\geq 1$ be any pair of integers and $c_0\in\C$ be any parameter.
Then $z_0\in\C$ is a root of $Q_{n,p}(c_0,z)$ if and only if one of  the following $5$ mutually exclusive conditions holds:
\begin{description}
\item[(1)] $z_0$ is a $(n,p)$-preperiodic point of $f_{c_0}$ such that  $f_{c_0}^l(z_0)\ne0$ for any $0\leq l<n$ and $[f^{ p}_{c_0}]^\prime(f^{ n}_{c_0}(z_0))\ne1.$
\item[(2)] $z_0$ is a $(n,p)$-preperiodic point of $f_{c_0}$ such that  $f_{c_0}^l(z_0)\ne0$ for any $0\leq l<n$ and $[f^{ p}_{c_0}]^\prime(f^{ n}_{c_0}(z_0))=1.$
\item[(3)] $z_0$ is a $(n,m)$-preperiodic point of $f_{c_0}$ such that  $f_{c_0}^l(z_0)\ne0$ for any $0\leq l<n$ and $m$  is a proper factor of $p$ with $[f^{m}_{c_0}]^\prime(f^{ n}_{c_0}(z_0))$  a primitive $\frac{p}{m}$-th root of unity.
\item[(4)] $z_0$ is a $(n,p)$-preperiodic point  of $f_{c_0}$  such that $f_{c_0}^l(z_0)=0$ for some $0\leq l<n.$

\item [(5)] $f_{c_0}^{(n-1)}(z_0)=0$ and $0$ is a $p$-periodic point of $f_{c_0}$.
\end{description}
\end{proposition}
We remark that in case (4), the case of $l=n-1$ never occurs.
\begin{proof}
Fix $c_0\in\C$. The proof goes by induction on $n$.
As $n=1,\ Q_{0,p}(c,f_c(z))=Q_{0,p}(c,z)\cdot Q_{1,p}(c,z)$. We claim that $z_0$ is a common root of $Q_{0,p}(c_0,z)$ and $Q_{1,p}(c_0,z)$ if and only if $z_0=0$ is a $p$ periodic point of $f_{c_0}$.

For sufficiency, we only need to note that, in this case, $0$ is a multiple root of $Q_{0,p}(c_0,f_{c_0}(z))$, but a simple root of $Q_{0,p}(c_0,z)$ by part 2 of Lemma \ref{period}. For necessity, $z_0$ must be a multiple root of $Q_{0,p}(c_0,f_{c_0}(z))$. It follows that  either $f_{c_0}(z_0)$ is a multiple root of $Q_{0,p}(c_0,z)$ or $z_0$ is a critical point of $f_{c_0}$.
In the former case, by $4$ of Lemma \ref{period}, $c_0$ is a parabolic parameter and $f_{c_0}(z_0)$ is a parabolic periodic point.  It means that $Q_{0,p}(c_0,f_{c_0}(z))$ and $Q_{0,p}(c_0,z)$ have the same zero multiplicity at $z_0$. Then $Q_{1,p}(c_0,z_0)\not=0$. In the latter case, we have $z_0=0$, and by 2 of Lemma \ref{period} $0$ is a $p$ periodic point of $f_{c_0}$.

Such $c_0,z_0$ correspond to the condition $(5)$.
In any other case,  $z_0$ is a root of $Q_{1,p}(c_0,z)$ if and only if $f_{c_0}(z_0)$ is a root of $Q_{0,p}(c_0,z)$
but $z_0$ is not periodic. In fact, if $z_0$ were periodic, it would have the same period and multiplier as its first image.  By part 2 of Lemma \ref{period}, we get that $Q_{0,p}(c_0,z_0)$ vanishes, which leads to a contradiction. Then part $2$ of Theorem \ref{period} implies that $z_0$ satisfies one of the conditions $(1),(2),(3),(4)$ in Proposition \ref{define}.

 Assume that the proposition is established for $1\leq l<n$. At this time, $Q_{n,p}(c,z)=Q_{n-1,p}(c,f_c(z))$. So for any $c_0\in\C$, $z_0$ is a root of $Q_{n,p}(c_0,z)$ if and only if $f_{c_0}(z_0)$ is a root of $Q_{n-1,p}(c_0,z)$. By Lemma \ref{specificity}, if $f_{c_0}(z_0)$ satisfies property (2) or (3), then the orbit of $z_0$ does not contain $0$. Therefore by the inductive assumption, the point  $z_0$  satisfies one of the $5$ exclusive conditions  in Proposition \ref{define}.
\end{proof}
In Proposition \ref{define}, the zeros of $Q_{n,p}(c,z)$ are divided into $5$ classes. We give some notations to denote the sets consisting of zeros of most classes in the following table.

\begin{tabular}{|l|l|}
\hline
The set&\qquad The points in the set \\\hline
$C_{n,p}$(primitive)& $(c,z)\text{ satisfies the condition }(2)\text{ in Proposition }\ref{define}$\\\hline
$C_{n,p}$(satellite)& $(c,z)\text{ satisfies the condition }(3)\text{ in Proposition }\ref{define}$\\\hline
$C_{n,p}$(Misiurewicz)& $(c,z)\text{ satisfies the condition }(4)\text{ in Proposition }\ref{define}$\\\hline
$C_{n,p}$(singular)& $(c,z)\text{ satisfies the condition }(5)\text{ in Proposition }\ref{define}$\\\hline
\end{tabular}

{Recall that for any $n, p\geq 1$, {the sets} $\check{\mathcal{X}}_{n,p}$ and $\mathcal{X}_{n,p}$ are defined by
$$\check{\mathcal{X}}_{n,p}=\bigl\{(c,z)\in\C^2\big |z \text{ is a }(n,p)\text{-preperiodic point of }f_c \bigr\}$$
and
$$\mathcal{X}_{n,p}:=\text{ the closure of $\check{\mathcal{X}}_{n,p}$ in $\C^2$}\ .$$}

 \begin{proposition}
 For $n,p\geq1$, we have
\[\mathcal{X}_{n,p}=\bigl\{(c,z)\big| Q_{n,p}(c,z)=0\bigr\}\quad\text{and}\quad\mathcal{X}_{n,p}\setminus\check{\mathcal{X}}_{n,p}=C_{n,p}(\text{satellite})\cup C_{n,p}(\text{singular})\]
\end{proposition}
\begin{proof}
Set $X:=\{(c,z)\mid Q_{n,p}(c,z)=0\}$. Then $X$ is a closed, perfect set. By the definition of $\check{\mathcal{X}}_{n,p}$ and Proposition \ref{define}, we have
\begin{equation}\label{closure}
X\setminus\big(\ C_{n,p}(\text{satellite})\cup C_{n,p}(\text{singular})\ \big)\ = \check{\mathcal{X}}_{n,p}\subset X
\end{equation}
We claim that the sets $C_{n,p}(\text{satellite})$ and $C_{n,p}(\text{singular})$ are both finite. If so, we get
\[X=\ov{X\setminus(C_{n,p}(\text{satellite})\cup C_{n,p}(\text{singular}))}\ =\ \ov{\check{\mathcal{X}}_{n,p}}=\mathcal{X}_{n,p}\subset X.\]
Hence it remains to check the claim.

If $(c_0,z_0)\in C_{n,p}(\text{satellite})$, it satisfies that $f^{n+p}_{c_0}(z_0)-f^n_{c_0}(z_0)=0$ and $[f^{ p}_{c_0}]^\prime(f^{ n}_{c_0}(z_0))=1$. Hence $c_0$ is a root of the resultant $R\in\mathbb{C}[c]$ of the equations $f^{n+p}_c(z)-f^n_c(z)=0$ and $(f_c^p)'(f^n_c(z))=1$. For a parameter $c$ outside the Multibrot set, all the periodic points of $f_{c}$ are repelling, so the polynomials  $f^{n+p}_{c}(z)-f^n_{c}(z)$ and $(f^{ p}_{c})^\prime(f^{ n}_{c}(z))-1$ do not have a common root. It follows that $R$ is not identically zero, and hence, its roots form a finite set.
If $(c_0,z_0)\in C_{n,p}(\text{singular})$, then $Q_{0,p}(c_0,0)=0$ by point (5) of Proposition \ref{define} and point 2 of Lemma \ref{period}, whereas the roots of $Q_{0,p}(c,0)$ form a finite set.
\end{proof}

\section{The irreducible  factorization of $Q_{n,p}$ }

In this section, we will show that the curve $\mathcal{X}_{n,p}$, $n\geq1$, has $d-1$ smooth irreducible components and analyze the properties of its singular points.  We always assume $n\geq1$ without emphasizing.

\subsection{Factorization of $Q_{n,p}$ and the features of its singular points }\label{3.2}

 Recall that for $f\in\C[c,z]$, $\text{Deg}(f)$ denotes the total degree of $f$ and $\text{deg}_z(f)$ denotes the degree of the variable $z$ in $f$.
\begin{lemma}\label{factor}
\begin{description}
\item[(Algebraic version)]There exists a unique sequence of monic polynomials $\{q_{n,p}^j\in\mathbf{C}[z]\}_{1\leq j\leq d-1}$  such that $$Q_{n,p}(c,z)=\prod_{j=1}^{d-1}q_{n,p}^j(c,z).$$  All points in $C_{n,p}(\text{singular})$ are zeros of $q_{n,p}^j\in\C[c,z]$, and there are no other common zeros for  $q_{n,p}^i$ and $q_{n,p}^j$ with $i\ne j$. Moreover, we have   $\text{Deg}(q_{n,p}^j)=\text{deg}_z(q_{n,p}^j)=\nu_d(p)d^{n-1}$.
\item[(Topological version)] Let $\mathcal{V}_{n,p}^j=\{(c,z)\in\C^2\big|q_{n,p}^j(c,z)=0\}\ (1\leq j\leq d-1)$. Then $C_{n,p}(\text{singular})\subset \mathcal{V}_{n,p}^j$ for each $j$ and $\bigl\{ \mathcal{V}_{n,p}^j\setminus
C_{n,p}(\text{singular})\bigr\}_{1\leq j\leq d-1}$ are pairwise disjoint.
\end{description}
\end{lemma}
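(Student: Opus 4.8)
The natural strategy is to exploit the $d$-fold rotational symmetry of the family $f_c(z)=z^d+c$. Let $\omega=e^{2\pi i/d}$ and consider the map $\rho:(c,z)\mapsto(c,\omega z)$. Since $f_c(\omega z)=\omega^d z^d+c=f_c(z)$, the polynomial $\Phi_{n,p}(c,\omega z)$ has exactly the same roots (in $z$, for fixed generic $c$) as $\Phi_{n,p}(c,z)$: indeed $\omega z$ is an $(n,p)$-preperiodic point of $f_c$ iff $z$ is. Hence $\rho$ permutes the roots of $Q_{n,p}(c_0,z)$ for every $c_0$, so $Q_{n,p}(c,\omega z)=u(c)\,Q_{n,p}(c,z)$ for a unit $u(c)\in\C[c]$; comparing leading terms in $z$ (both sides are monic up to the factor $\omega^{\deg_z Q_{n,p}}$) shows $u$ is the constant $\omega^{-\deg_z Q_{n,p}}$. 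Thus $Q_{n,p}(c,z)$ is, up to a scalar, invariant under $z\mapsto \omega z$; since $\deg_z Q_{n,p}=\nu_d(p)(d-1)d^{n-1}$ need not be divisible by $d$, the cleaner statement is that $Q_{n,p}(c,\omega z)=\omega^{m}Q_{n,p}(c,z)$ with $m\equiv \deg_z Q_{n,p}\pmod d$.

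First I would use this to produce the factorization combinatorially. Group the roots of $Q_{n,p}(c,z)$ (over $K=\C(c)$, i.e. over $\overline{K}$) into orbits under multiplication by $\omega$. A root $z_0$ lies on a \emph{short} orbit (size $<d$) exactly when $\omega^i z_0=z_0$ for some $0<i<d$, i.e. $z_0=0$; and by Proposition \ref{define}, $z_0=0$ being a root of $Q_{n,p}(c_0,z)$ happens precisely in the situations of condition (1) (some $f_{c_0}^l(0)=0$ with $l<n$) or condition (4) ($f_{c_0}^{n-1}(0)=0$), i.e. exactly on the finite set $C_{n,p,1}\cup C_{n,p,4}$ restricted to $z=0$ — but in fact one checks that for generic $c$ the root $z=0$ does not occur, so over $K$ every $\omega$-orbit of roots has full size $d$... more precisely, the roots with $z\neq 0$ form $\omega$-orbits of size $d$. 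I would then define $q_{n,p}^j$ by collecting, for $j=1,\dots,d-1$, one representative from each such orbit according to a consistent choice, and handling the (finitely many, parameter-special) points of $C_{n,p,4}$ — where several $q_{n,p}^j$ must share a root — separately. The genuinely clean route is: set $\widetilde{Q}_{n,p}(c,w):=Q_{n,p}(c,z)$ where $w=z^d$... but that only works if $Q_{n,p}$ is a polynomial in $z^d$ times a power of $z$, which the symmetry above gives. Concretely, write $Q_{n,p}(c,z)=z^{a}\,R_{n,p}(c,z^d)$ for some $0\le a<d$ and $R_{n,p}\in\C[c,w]$ monic in $w$; then since $w\mapsto R_{n,p}(c,w)$ has degree $\nu_d(p)d^{n-1}$ when $a=0$ (the generic case, $0$ not a root), I get $d$ "branches" $z\mapsto$ (the $d$-th roots of a root of $R_{n,p}$). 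To get $d-1$ components rather than $d$, I use that $R_{n,p}(c,0)$ — i.e. the condition $z=0$ — is itself governed by a lower preperiodic/periodic curve and drops the count by one; equivalently, I track the $z$-degrees: $\deg_z Q_{n,p}=(d-1)d^{n-1}\nu_d(p)=(d-1)\cdot d^{n-1}\nu_d(p)$, which is visibly $(d-1)$ copies of $d^{n-1}\nu_d(p)$, matching the claimed $\deg q_{n,p}^j=d^{n-1}\nu_d(p)$.

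For the \textbf{topological version}: once $Q_{n,p}=\prod_{j=1}^{d-1}q_{n,p}^j$ is established with the stated degrees, set $\mathcal{V}_{n,p}^j=\{q_{n,p}^j=0\}$. That $C_{n,p,4}\subset\mathcal{V}_{n,p}^j$ for all $j$ is immediate from the algebraic statement that every point of $C_{n,p,4}$ is a common root of all the $q_{n,p}^j$. That the $\mathcal{V}_{n,p}^j\setminus C_{n,p,4}$ are pairwise disjoint is exactly the algebraic statement that $q_{n,p}^i,q_{n,p}^j$ ($i\ne j$) have no common root outside $C_{n,p,4}$. So the topological version is a direct translation of the algebraic version, modulo checking that a point $(c_0,z_0)\in\mathcal{V}_{n,p}^i\cap\mathcal{V}_{n,p}^j$ forces $q_{n,p}^i(c_0,\cdot)$ and $q_{n,p}^j(c_0,\cdot)$ to share the root $z_0$ — which is definitional.

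\textbf{Main obstacle.} The delicate point is pinning down \emph{exactly} which $\omega$-orbit data distinguishes the $q_{n,p}^j$ and proving there are precisely $d-1$ of them (not $d$) with no spurious common roots except on $C_{n,p,4}$. The symmetry argument shows the roots break into $\omega$-orbits, but orbits of roots are not the same as irreducible factors: one must show that labelling branches by "which $d$-th root" is globally consistent over $\C(c)$, i.e. that the $d$-fold cover $z\mapsto z^d$ restricted to $\mathcal{X}_{n,p}$ has the right monodromy. I expect this is handled by the inductive structure from Remark \ref{induct} (the relation $Q_{n,p}(c,z)=Q_{n-1,p}(c,f_c(z))$ for $n\ge 2$ and $Q_{0,p}(c,f_c(z))=Q_{0,p}(c,z)Q_{1,p}(c,z)$), pulling the factorization back from the $n=1$ case, where $Q_{1,p}(c,z)=Q_{0,p}(c,z^d+c)/Q_{0,p}(c,z)$ and one analyzes the $d$ preimages of each periodic point under $z\mapsto z^d+c$ directly, the "$-1$" coming from the branch through $z=0$ collapsing. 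Establishing the base case $n=1$ cleanly, and then checking uniqueness and monicity of the $q_{n,p}^j$ through the induction, is where the real work lies; everything else is bookkeeping with degrees ($(d-1)d^{n-1}\nu_d(p) = (d-1)\cdot d^{n-1}\nu_d(p)$) and with Proposition \ref{define}.
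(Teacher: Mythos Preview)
Your $\omega$-symmetry heuristic is the right instinct, but the specific claim that $Q_{n,p}(c,\omega z)=\omega^{m}Q_{n,p}(c,z)$ fails precisely at the base case $n=1$, and this is where the factorization is actually determined. If $z$ is a $(1,p)$-preperiodic point of $f_c$, then $f_c(z)$ is $p$-periodic; among its $d$ preimages $z,\omega z,\ldots,\omega^{d-1}z$, exactly one lies on the cycle and is periodic, while the remaining $d-1$ are $(1,p)$-preperiodic. Hence multiplication by $\omega$ does \emph{not} permute the roots of $Q_{1,p}(c_0,\cdot)$: it mixes them with the roots of $Q_{0,p}(c_0,\cdot)$. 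So $Q_{1,p}$ is not of the form $z^{a}R(c,z^{d})$, and your reduction via $w=z^d$ cannot get started. The ``$-1$'' you are trying to account for is not a collapse at $z=0$; it is the removal of the periodic preimage from each $\omega$-orbit.

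The paper sidesteps this by writing down the factors of $Q_{1,p}$ explicitly. If $\{\Delta_s\}_{s=1}^{\nu_d(p)}$ are the roots of $Q_{0,p}$ over $\overline K$, then for each fixed $j\in\{1,\dots,d-1\}$ the set $\{\omega^{j}\Delta_s\}_s$ consists of $(1,p)$-preperiodic points, and (using $d\mid\nu_d(p)$) one has
\[
q_{1,p}^{j}(c,z):=\prod_{s}(z-\omega^{j}\Delta_s)=Q_{0,p}(c,\omega^{-j}z)\in\C[c,z],
\]
manifestly monic of degree $\nu_d(p)$; the count $d-1$ is then transparent, since $j=0$ gives back $Q_{0,p}$ itself, which has already been divided out. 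For $n\ge 2$ one sets $q_{n,p}^{j}(c,z):=q_{n-1,p}^{j}(c,f_c(z))$ and uses $Q_{n,p}(c,z)=Q_{n-1,p}(c,f_c(z))$ from Remark \ref{induct}. The common-root statement is equally direct from this formula: $(c_0,z_0)$ is a common root of $q_{1,p}^{i}$ and $q_{1,p}^{j}$ iff both $\omega^{-i}z_0$ and $\omega^{-j}z_0$ are $p$-periodic for $f_{c_0}$ with the same $f_{c_0}$-image, which forces $z_0=0$ and hence $(c_0,0)\in C_{1,p,4}$; the case $n\ge 2$ follows by pullback. Your observation that the topological version is a straight translation of the algebraic one is correct.
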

\begin{proof}
Recall that $\mathbf{C}=\C[c]$ and $\mathbf{K}$ is  a fixed {algebraically} closed field containing $\mathbf{C}$.

Let  $\Delta$ be a root of $Q_{0,p}\in\mathbf{C}[z]$.  Then by part 1 of Lemma \ref{period}, $$\Phi_{0,p}(c,\Delta)=f^p_c(\Delta)-\Delta=0.$$
We see from this equation that $\Delta$ is periodic under $f_c$ and $\Delta,\ldots, f^{p-1}_c(\Delta)$ are roots of $\Phi_{0,p}$.
Note that $\Phi_{0,p}(c,0)=f^p_c(0)$ is a polynomial in the variable $c$ of the degree $d^{p-1}$, so $\Delta\not=0$. Consequently, $\omega\Delta,\ldots,\omega^{d-1}\Delta$ are not roots of $Q_{0,p}$, where $\omega=e^{\frac{2\pi i}{d}}$, because they are not periodic under $f_c$. Then by the equation $Q_{0,p}(c,f_c(z))=Q_{0,p}(c,z)Q_{1,p}(c,z)$ (see (\ref{induction})), we get that $\omega\Delta,\ldots,\omega^{d-1}\Delta$ are roots of $Q_{1,p}\in\mathbf{C}[z]$.

 Let us factorize $Q_{0,p}$  in $\mathbf{K}$ by $$Q_{0,p}(c,z)=\prod_{i=1}^{\nu_d(p)}(z-\Delta_i)$$
($\Delta_{s_1}\ne\Delta_{s_2}$  for $s_1\ne s_2$, because all roots of $\Phi_{0,p}\in\mathbf{C}[z]$ are simple (Lemma \ref{squarefree}), and so are $Q_{0,p}$ (part 1 of Lemma \ref{period})). Then $Q_{1,p}$ can be expressed as
\begin{equation}\label{q0}
Q_{1,p}=\prod_{i=1}^{\nu_d(p)}(z-\omega\Delta_i)\cdots(z-\omega^{d-1}\Delta_i)=\prod_{j=1}^{d-1}\prod_{i=1}^{\nu_d(p)}(z-\omega^j\Delta_i)
\end{equation}
To see this, we first note that for any $s,t\in[1,d-1]$ and $i_1\not=i_2\in[1,\nu_d(p)]$, $\omega^s\Delta_{i_1}\not=\omega^t\Delta_{i_2}$. 
But it is impossible because both $\Delta_{i_1}$ and $\Delta_{i_2}$ are periodic. Thus $\{\omega\Delta_i,\ldots,\omega^{d-1}\Delta_i\}_{i=1}^{\nu_d(p)}$ are pairwise distinct roots of $Q_{1,p}\in\mathbf{C}[z]$ by the discussion above, then $\prod_{i=1}^{\nu_d(p)}(z-\omega\Delta_i)\cdots(z-\omega^{d-1}\Delta_i)$ is a divisor of $Q_{1,p}$. As its degree is $(d-1)\nu_d(p)$, equal to the degree of $Q_{1,p}$, and $Q_{1,p}$ is monic, we get (\ref{q0}).
  For $j\in[1,d-1]$, set
 \begin{equation}\label{qj}
 q_{1,p}^j(c,z)=\prod_{i=1}^{\nu_d(p)}(z-\omega^{j}\Delta_i)=(\omega^j)^{\nu_d(p)}\prod_{i=1}^{\nu_d(p)}(\omega^{-j} z-\Delta_i)=(\omega^j)^{\nu_d(p)}Q_{0,p}(c,\omega^{-j}z).
\end{equation}
 Note that $d\mid \nu_d(p)$, so $ (\omega^j)^{\nu_d(p)}=1$. Then $q^j_{1,p}(c,z)$ is a monic polynomial in $\mathbf{C}[z]$, satisfying
\begin{equation}\label{Q1}
Q_{1,p}(c,z)=\prod_{j=1}^{d-1}q_{1,p}^j(c,z).
\end{equation}
 This gives a factorization of $Q_{1,p}$ in $\mathbf{C}[z]$.  By formula (\ref{qj}) and the degree conclusion in point 1 of Lemma $\ref{period}$, the total degree $\text{Deg}(q_{1,p}^j)$ and $\text{deg}_z(q_{1,p}^j)$ are both $\nu_d(p)$.

For $n\geq2$, we can define $q_{n,p}^j(c,z)$ inductively by $q_{n,p}^j(c,z)=q_{n-1,p}^j(c,f_c(z))$. Using induction, the degree conclusion in the lemma follows directly form (2) and (3) of Lemma \ref{degree}. As $Q_{n,p}(c,z)=Q_{n-1,p}(c,f_c(z))$, we have \begin{equation}\label{e2}
Q_{n,p}(c,z)=\prod_{j=1}^{d-1}q_{n,p}^j(c,z).
\end{equation} This is a factorization of $Q_{n,p}(c,z)$ in $\mathbf{C}[z]$.

We are left to prove that each $q_{n,p}^j(c,z)$ satisfies the remaining properties announced in the lemma. For $n=1$, since $q_{1,p}^j(c,z)=Q_{0,p}(c,\omega^{-j}z)$, then $(c_0,z_0)$ is a common root of $q_{1,p}^i(c,z)$ and $q_{1,p}^j(c,z)$ for some $1\leq i\ne j\leq d-1\iff \text{both }(c_0,\omega^{-i}z_0)$ and $(c_0,\omega^{-j}z_0)$ are zeros of $Q_{0,p}(c,z)$. It follows that $\omega^{-i}z_0$ and $\omega^{-j}z_0$ are both periodic point of $f_{c_0}$, hence $z_0=0$.  Note that, in case (3) of Lemma \ref{period} item 2, the critical point $0$ is never periodic (Lemma \ref{specificity}), so $0$ has period $p$. It follows that $(c_0,z_0)\in C_{1,p}(\text{singular})$. On the other hand, if $(c_0,z_0)\in C_{1,p}(\text{singular})$, then $(c_0,\omega^{-i}z_0)=(c_0,\omega^{-j}z_0)=(c_0,0)$ is a  zero of $Q_{0,p}(c,z)$. For $n\geq2$, the conclusion can be deduced from the case of $n=1$ and the definition of $q_{n,p}^j(c,z)$.
\end{proof}

For convenience, we summarize the definitions of  $q_{1,p}^j$ in term  of $Q_{0,p}$ and  the inductive definitions of $q^j_{n,p}$ ($n\geq2$) in terms of $q_{n-1,p}^j$ as a corollary.
\begin{corollary}\label{qij}
For any $p\geq1,1\leq j\leq d-1$, and $\omega=e^{\frac{2\pi i}{d}}$, we have
\[\left\{
    \begin{array}{ll}
      q^j_{1,p}(c,z)=Q_{0,p}(c,\omega^{-j}z),  \\
      q^j_{n,p}(c,z)=q_{n-1,p}^j(c,f_c(z)), & \hbox{$n\geq2$.}
    \end{array}
  \right.
\]
\end{corollary}

\noindent{\bf Example:} Here are some examples of $Q_{n,p}$ and their decomposition. Let $d=3$. Suppose $p=1$, then we have $Q_{0,1}(c,z)=z^3+c-z$,
\begin{eqnarray*}
Q_{1,1}(c,z)&=& c^2+cz+z^2+2cz^3+z^4+z^6\\
&=&(z^3+c-e^{-\frac{2}{3}\pi i}z)(z^3+c-e^{-\frac{4}{3}\pi i}z)\\
&=&q_{1,1}^1(c,z)\cdot q_{1,1}^2(c,z).
\end{eqnarray*}
and
\begin{eqnarray*}
Q_{2,1}(c,z)&=&3c^2+3 c^4 + (c^6 + 3 c  + 10 c^3  + 6 c^5) z^3 + (1 +
 12 c^2  + 15 c^4) z^6 \\
&&+ (6 c  + 20 c^3) z^9 + (1 +
 15 c^2) z^{12} + 6 c z^{15} + z^{18}\\
&=&\big((1-e^{-\frac{2}{3}\pi i})c + c^3 + (3 c^2-e^{-\frac{2}{3}\pi i}) z^3  + 3 c z^6 + z^9\big)\times\\
&&\big((1-e^{-\frac{4}{3}\pi i})c + c^3 + (3 c^2-e^{-\frac{4}{3}\pi i}) z^3  + 3 c z^6 + z^9\big)\\
&=&q_{2,1}^1(c,z)\cdot q_{2,1}^2(c,z).
\end{eqnarray*}
Suppose $p=2$, then we have $Q_{0,2}(c,z)=1 + c^2 + c z + z^2 + 2 c z^3 + z^4 + z^6$ and
\begin{eqnarray*}
Q_{1,2}(c,z)&=&1 + 2 c^2 + c^4 - (c+c^3) - z^2 + (3 c  + 4 c^3) z^3 -
 3 c^2 z^4\\
&& + (1 + 6 c^2) z^6 - 3 c z^7 + 4 c z^9 - z^{10} + z^{12}\\
&=&\big(1+c^2+e^{-\frac{2}{3}\pi i }z+e^{-\frac{4}{3}\pi i}z^2+2cz^3+e^{-\frac{2}{3}\pi i}z^4+z^6\big)\times\\
&&\big(1+c^2+e^{-\frac{4}{3}\pi i }z+e^{-\frac{2}{3}\pi i}z^2+2cz^3+e^{-\frac{4}{3}\pi i}z^4+z^6\big)\\
&=&q_{1,2}^1(c,z)\cdot q_{1,2}^2(c,z)
\end{eqnarray*}

From Lemma \ref{factor}, we  see that in the case $d\geq3$, the polynomial $Q_{n,p}$ is both reducible and non-smooth,  because $C_{n,p}$(singular), which is non-empty, belongs to the set of singular points of $Q_{n,p}$.

We now turn to the study of the components $q_{n,p}^j(c,z)$.
The following theorem is the core of this section.

\begin{theorem}\label{irreducible}
Given $d\geq2$, for any $n,p\geq1,\ j\in[1,d-1]$,  the polynomial $q_{n,p}^j(c,z)$ is smooth and irreducible.
\end{theorem}
The proof of this theorem is postponed to $\S$ \ref{3.3}.

By this theorem, all components $\VVV_{n,p}^j$ are Riemann surfaces. Together with Lemma \ref{factor}, this implies that the singularity set of $\mathcal{X}_{n,p}$ is equal to $C_{n,p}(\text{singular})$.
 The next proposition characterizes the features of these singularities.
\begin{proposition}\label{singularity}
Given $d\geq2$, for $n,p\geq1$, each singularity $(c_0,z_0)$ of $\mathcal{X}_{n,p}$ has multiplicity $d-1$. Furthermore, if $f^l_{c_0}(z_0)=0$ for some $0\leq l\leq n-2$, then
$\mathcal{X}_{n,p}$ has one tangent of multiplicity $d-1$ at $(c_0,z_0)$; otherwise, the singularity $(c_0,z_0)$ is ordinary.
\end{proposition}
\begin{proof}
Let $(c_0,z_0)$ be a singular point of $\mathcal{X}_{n,p}$. Since each component of $\mathcal{X}_{n,p}$ is smooth and they are pairwise intersecting at $(c_0,z_0)$, then the first non-vanishing term of $Q_{n,p}(c,z)$ at $(c_0,z_0)$ is $d-1$. Hence the multiplicity of the singularity $(c_0,z_0)$ is $d-1$.

If  $n=1$, by (5) of Proposition \ref{define}, the fact that $(c_0,z_0)\in C_{1,p}(\text{singular})$ implies that $z_0=0$ and $(c_0,0)\in \mathcal{X}_{0,p}$. According to Lemma \ref{specificity} $c_0$ is not a parabolic parameter. Then it follows from  part 4 of Lemma \ref{period} that $(c_0,0)$ is not a critical point of $\pi_{0,p}$, and hence $\big(\partial Q_{0,p}/\partial z\big)(c_0,0)\not=0$. Meanwhile, according to part 5 of Lemma \ref{period}, $\big(\partial Q_{0,p}/\partial c\big)(c_0,0)\not=0$.  Thus $Q_{0,p}(c,z)$ has a local expression
\[Q_{0,p}(c,z)=a_{0,p}(c-c_0)+b_{0,p}z+\text{ higher order terms}\]
around $(c_0,0)$ with $a_{0,p},b_{0,p}\ne0$. It follows  that
\[q_{1,p}^j(c,z)=Q_{0,p}(c,\omega^{-j}z)=a_{0,p}(c-c_0)+b_{0,p}\omega^{-j}z+\text{ higher order terms}\]
Therefore the tangents of $\VVV_{1,p}^j$ ($1\leq j\leq d-1$) at $(c_0,0)$ are pairwise distinct.

For $n\geq 2$,  we denote by $a_{n,p}^j(c-c_0)+b_{n,p}^j(z-z_0)$ the equation of the tangent of $\VVV_{n,p}^j$ at $(c_0,z_0)$. By the formula $q_{n,p}^j(c,z)=q_{1,p}(c,f^{n-1}_c(z))$ (Corollary \ref{qij}), we have that
\[\left\{
    \begin{array}{ccl}
      a_{n,p}^j &=& \dfrac{\partial q_{n,p}^{j}}{\partial c}(c_0,z_0)=\dfrac{\partial q_{1,p}^{j}}{\partial c}(c_0,0)+\dfrac{\partial q_{1,p}^{j}}{\partial z}(c_0,0)\dfrac{\partial f_c^{n-1}}{\partial c}(c_0,z_0)=a_{0,p}+b_{0,p}\omega^{-j}\dfrac{\partial f_c^{n-1}}{\partial c}(c_0,z_0) \\[5pt]
      b_{n,p}^j&=& \dfrac{\partial q_{n,p}^{j}}{\partial z}(c_0,z_0)=\dfrac{\partial q_{1,p}^{j}}{\partial z}(c_0,0)(f^{n-1}_{c_0})'(z_0)=b_{0,p}\omega^{-j}(f^{n-1}_{c_0})'(z_0)
    \end{array}
  \right.
\]
If there exists $0\leq l\leq n-2$ such that $f^l_{c_0}(z_0)=0$, then $(f^{n-1}_{c_0})'(z_0)=0$, and hence $b_{n,p}^j=0$. It follows that the first non-vanishing term of $Q_{n,p}$ at $(c_0,z_0)$ is $a(c-c_0)^{d-1}$ where $a$ is a non-zero constant, i.e., $\mathcal{X}_{n,p}$ has the tangent $c=c_0$ of multiplicity $d-1$ at $(c_0,z_0)$. In  the other cases, we get $(f^{n-1}_{c_0})'(z_0)\not=0$. Combining this point and the fact  that $a_{0,p},b_{0,p}\not=0$, it is not difficult to check that the pairs $(a_{n,p}^j,b_{n,p}^j) (1\leq j\leq d-1$) are pairwise non-colinear. Hence the tangents of $\VVV_{n,p}^j$ ($1\leq j\leq d-1$) at $(c_0,z_0)$ are pairwise distinct, that is, $(c_0,z_0)$ is ordinary.
\end{proof}

\subsection{Proof of the smoothness and irreducibility of $q_{n,p}^j$}\label{3.3}
The objective here is to prove Theorem \ref{irreducible}.

 The approach to prove the smoothness is similar to that in \cite{BT}.   The idea is to prove that some partial derivative of $q_{n,p}^j$ is non vanishing. Following A. Epstein, we will express this derivative as the coefficient of a quadratic differential of the form $(f_c)_\star\Q-\Q$. Thurston's contraction principle gives $(f_c)_\star\Q-\Q\ne0$,  whence our partial derivative is non-zero.

 The approach to the irreducibility is based on the connectedness of periodic curve $\mathcal{X}_{0,p}$. Then we will show the connectivity of $\mathcal{V}_{n,p}^j$ using a branched covering
 by induction on the preperiodic index $n$.

 Here we list some definitions and results about quadratic differentials and Thurston's contraction principle. All their proofs can be found in \cite{BT} and \cite{LG}.

 We use $ \Q (\C) $ to denote the set of
meromorphic quadratic differentials on $ \C $ whose poles (if any) are all simple. If $\Q\in \Q(\C)$ and $ U $ is a bounded open subset of $ \C $, the norm
\[\| \Q \| _U \eqdef \iint_U| q| \]
is well defined and finite.

For $ f: \C \to \C $  a non-constant polynomial and $\Q=q\dz^2$ a meromorphic quadratic differential on $\C$,  the pushforward $ f_* \Q$ is defined by the quadratic differential
\[ f_* \Q: = Tq \dz ^ 2\quad\text{with}\quad  Tq (z) \eqdef \sum_{f (w) = z} \frac{q (w)}{f '(w) ^ 2}. \]
If $ Q \in \Q (\C) $, then $ f_* \Q \in \Q (\C) $ also.
The following lemma is a weak version of Thurston's contraction principle.
\begin{lemma} \label{contract}
If $ f: \C \to \C $ is a polynomial and if $ \Q \in \Q (\C) $, then
$ f_* \Q \neq \Q $.
\end{lemma}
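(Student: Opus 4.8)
The plan is to prove that for a non-constant polynomial $f$ and any $\Q\in\Q(\C)$ we have $f_*\Q\ne\Q$ by a norm-comparison argument: I will show that $\|f_*\Q\|_U < \|\Q\|_{f^{-1}(U)}$ for a suitable bounded open set $U$, while at the same time $\|\Q\|_{f^{-1}(U)}\le \|\Q\|_{U}$ once $U$ is chosen so that $f^{-1}(U)\subset U$; combining these strict and non-strict inequalities forces $f_*\Q\ne\Q$. The first step is the pointwise contraction inequality for pushforward: for $Tq(z)=\sum_{f(w)=z} q(w)/f'(w)^2$, the triangle inequality gives $|Tq(z)|\le \sum_{f(w)=z}|q(w)|/|f'(w)|^2$, and then integrating against Lebesgue measure and using the change-of-variables formula $\iint_{f^{-1}(U)}|q| = \iint_U \big(\sum_{f(w)=z}|q(w)|/|f'(w)|^2\big)\,|dz|^2$ (valid because $f$ is a branched cover and $|f'|^2$ is the Jacobian) yields
\[
\|f_*\Q\|_U \;=\;\iint_U |Tq|\;\le\;\iint_{f^{-1}(U)}|q|\;=\;\|\Q\|_{f^{-1}(U)}.
\]
The second step is to make this inequality strict: equality in the triangle inequality $|Tq(z)|\le\sum_{f(w)=z}|q(w)|/|f'(w)|^2$ on a set of positive measure forces the summands $q(w)/f'(w)^2$ to be positively aligned (same argument) for a.e. $z$ with at least two preimages; since $f$ has degree $\ge 2$ this is a strong rigidity constraint, and a short argument (using that $q$ is meromorphic hence real-analytic away from poles, and that the local branches $w(z)$ are holomorphic) shows it cannot hold identically unless $q\equiv 0$. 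So if $\Q\ne 0$ then $\|f_*\Q\|_U < \|\Q\|_{f^{-1}(U)}$ for appropriate $U$.

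The third step is choosing $U$ so that $f^{-1}(U)\subseteq U$: since $f$ is a polynomial of degree $\ge 2$, $f(z)/z^{\deg f}\to$ const, so for $R$ large the disk $U=\{|z|<R\}$ satisfies $f^{-1}(U)\subset U$ (indeed $f$ maps $\{|z|\ge R\}$ into itself). Then $\|\Q\|_{f^{-1}(U)}\le\|\Q\|_U$, and combined with the strict inequality from step two we get $\|f_*\Q\|_U<\|\Q\|_U$. If we had $f_*\Q=\Q$, the two norms over $U$ would be equal, a contradiction; hence $f_*\Q\ne\Q$. The case $\Q=0$ is trivial since $f_*0=0\ne$... wait — here one must be slightly careful: $f_*0=0$, so the statement $f_*\Q\ne\Q$ is literally false for $\Q=0$; I expect the intended reading is $\Q\ne 0$ (a meromorphic quadratic differential with at least one pole, or simply a nonzero one), and the lemma should be applied only in that case in Section \ref{3.3}. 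Assuming $\Q\ne0$, the argument above is complete.

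The main obstacle is the strictness in step two: ruling out the degenerate possibility that the triangle inequality is an equality almost everywhere. The clean way to handle it is to invoke the full statement from \cite{BT} or \cite{LG} that equality $\|f_*\Q\|=\|\Q\|$ on an $f$-invariant domain forces $\Q$ to be pulled back from a quotient, which is impossible on $\C$ for $\deg f\ge 2$ with only simple poles; alternatively one argues directly that positive alignment of the branch-sum for a.e. $z$, together with the open mapping behaviour of the holomorphic branches $w_i(z)$ near a regular value, forces $q$ to vanish on an open set and hence everywhere by analyticity. I would present the direct version for self-containedness, isolating the alignment computation as the single technical point.
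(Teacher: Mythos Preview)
The paper does not actually prove this lemma; it states it and refers to \cite{BT} and \cite{LG} for the proof. Your norm-contraction outline is precisely the standard argument one finds there, so you are on the right track and essentially aligned with what the paper is citing.

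One simplification is worth pointing out: your ``step two'' (forcing strict inequality in the pointwise triangle estimate $|Tq(z)|\le\sum_{f(w)=z}|q(w)|/|f'(w)|^{2}$) is the most delicate part of your write-up and is in fact unnecessary here. Strictness can be obtained far more cheaply from step three alone. For a polynomial of degree $d\ge 2$ and $R$ large, not only does $f^{-1}(D_R)\subset D_R$ hold, but the complement $D_R\setminus f^{-1}(D_R)$ contains a nonempty open annulus (since $|f(z)|\sim |z|^{d}\gg R$ near $|z|=R$). If $\Q\ne 0$ then $|q|>0$ almost everywhere, so $\|\Q\|_{f^{-1}(D_R)}<\|\Q\|_{D_R}$ strictly. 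Combining this with the non-strict inequality $\|f_*\Q\|_{D_R}\le\|\Q\|_{f^{-1}(D_R)}$ from your step one already yields $\|f_*\Q\|_{D_R}<\|\Q\|_{D_R}$, contradicting $f_*\Q=\Q$. This bypasses the alignment/rigidity discussion entirely; that finer analysis is only needed in the general Thurston setting (rational maps on $\widehat{\C}$), where no such strictly forward-invariant disk is available.

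Your remark about $\Q=0$ is well taken: the statement is literally vacuous there, and in the application in \S\ref{3.3} the differential $\Q$ is visibly nonzero, so the intended hypothesis is $\Q\ne 0$.
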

The formulas below appeared in \cite{LG} chapter $2$, we write them together as a lemma.
\begin{lemma} [Levin] \label{levin}
For $f=f_c$, we have
\begin{equation}\label{pushforward} \left\{\begin{array}{ll} \mystrut f_*\left(\dfrac{ \dz^2}{z}\right)=0\vspace{0.1cm}\\
f_*\left(\dfrac{\dz^2}{z-a}\right)=\dfrac{1}{f'(a)}
\left(\dfrac{\dz^2}{z-f(a)}-\dfrac{\dz^2}{z-c}\right) & \text{if } a\ne 0\vspace{0.1cm}\\
\end{array}\right.\end{equation}
\end{lemma}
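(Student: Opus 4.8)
The plan is to verify both identities by a direct computation of the transfer operator, viewing them as the cases $a=0$ and $a\ne0$ of $f_*\bigl(\dz^2/(z-a)\bigr)$. Recall that for $f=f_c$ and $\Q=q\,\dz^2$ one has $f_*\Q=(Tq)\,\dz^2$ with $Tq(z)=\sum_{f(w)=z}q(w)/f'(w)^2$, and here $f'(w)=dw^{d-1}$. In either case $Tq$ is a symmetric function of the branches of $f^{-1}$, hence a rational function of $z$, so it suffices to establish each identity for all $z$ in a dense open subset of $\C$ and then appeal to the identity theorem. Concretely, fix $z$ with $z\ne c$ (and, in the second case, also $z\ne f(a)$), write $\zeta=z-c$, and let $w_0$ be a $d$-th root of $\zeta$, so that the $d$ preimages of $z$ are the distinct points $w_k=\omega^k w_0$, $k=0,\dots,d-1$, where $\omega=e^{2\pi i/d}$.

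For the first identity take $q(w)=1/w$. Using $w_k^d=\zeta$ one finds $q(w_k)/f'(w_k)^2=1/(d^2w_k^{2d-1})=w_k/(d^2\zeta^2)$, whence
$$Tq(z)=\frac{1}{d^2\zeta^2}\sum_{k=0}^{d-1}w_k=\frac{w_0}{d^2\zeta^2}\sum_{k=0}^{d-1}\omega^k=0$$
because $\sum_{k=0}^{d-1}\omega^k=0$ for $d\ge2$. Hence $f_*(\dz^2/z)=0$.

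For the second identity fix $a\ne0$ and a generic $z$, and let $w_1,\dots,w_d$ denote the simple roots of $u^d-\zeta$, so that $\prod_i(u-w_i)=u^d-\zeta$ and $\sum_i(u-w_i)^{-1}=d\,u^{d-1}/(u^d-\zeta)$. The quantity to compute is $Tq(z)=\sum_{i=1}^{d}\bigl[(w_i-a)\,d^2w_i^{2d-2}\bigr]^{-1}$. I would decompose $\dfrac{1}{(w-a)w^{2d-2}}$ into partial fractions: the residue at $w=a$ is $a^{-(2d-2)}$, and from the expansion $1/(w-a)=-\sum_{j\ge0}w^j/a^{j+1}$ the principal part at $w=0$ is $\sum_{m=1}^{2d-2}\beta_m w^{-m}$ with $\beta_m=-a^{m-2d+1}$. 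Then I apply two sums over the roots of $u^d=\zeta$: evaluating $\sum_i(u-w_i)^{-1}$ at $u=a$ gives $\sum_i(w_i-a)^{-1}=-d\,a^{d-1}/(a^d-\zeta)=d\,a^{d-1}/(z-f(a))$, using $a^d=f(a)-c$; and since $w_i^d=\zeta$ for every $i$, $\sum_i w_i^{-m}=0$ unless $d\mid m$, which for $1\le m\le2d-2$ occurs only at $m=d$, where $\sum_i w_i^{-d}=d/\zeta$. Substituting, and using $\beta_d=-a^{-(d-1)}$ and $f'(a)=d\,a^{d-1}$,
$$Tq(z)=\frac{1}{d^2}\left(\frac{d}{a^{d-1}(z-f(a))}-\frac{d}{a^{d-1}\zeta}\right)=\frac{1}{f'(a)}\left(\frac{1}{z-f(a)}-\frac{1}{z-c}\right),$$
which is the asserted formula. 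Equivalently, one may write the $i$-th summand as the residue of the rational $1$-form $\dfrac{du}{d\,u^{d-1}(u-a)(u^d-\zeta)}$ at $u=w_i$, so that $Tq(z)$ is minus the sum of its residues at $u=a$, at $u=0$ (an order-$(d-1)$ pole), and at $u=\infty$ (where the residue vanishes, the form being $O(u^{-2d})\,du$ there).

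I do not anticipate a real obstacle: the argument is a short explicit computation. The only points needing care are the bookkeeping of the principal part of $1/((w-a)w^{2d-2})$ at the origin (equivalently, the order-$(d-1)$ residue at $u=0$ in the residue version), the remark that among $m=1,\dots,2d-2$ only $m=d$ is divisible by $d$ so that no other inverse power of $w_i$ contributes, and the observation that $Tq$ is rational in $z$, which is what lets one upgrade the generic identity to an identity of quadratic differentials. The first identity is also the $a\to0$ limit of the second, but the one-line computation above is cleaner.
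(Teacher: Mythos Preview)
Your argument is correct. Both identities are verified by the direct computation you describe: the vanishing of $\sum_{k}\omega^{k}$ kills the first pushforward, and for the second the partial-fraction decomposition of $1/\bigl((w-a)w^{2d-2}\bigr)$ together with the power-sum identities $\sum_i w_i^{-m}=0$ for $d\nmid m$ and the logarithmic-derivative formula $\sum_i(a-w_i)^{-1}=d\,a^{d-1}/(a^d-\zeta)$ give exactly the stated expression. The bookkeeping (residue $a^{-(2d-2)}$ at $w=a$, $\beta_d=-a^{-(d-1)}$, only $m=d$ surviving among $1\le m\le 2d-2$) is all right, and the remark that $Tq$ is rational in $z$ justifies passing from a generic $z$ to the full identity of meromorphic quadratic differentials.

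As for comparison with the paper: the paper does not actually prove this lemma---it is quoted from Levin \cite{LG}, Chapter~2, and stated without argument. So your self-contained computation goes beyond what the paper provides. The residue-theorem reformulation you mention at the end (viewing $Tq(z)$ as minus the sum of the residues of $du/\bigl(d\,u^{d-1}(u-a)(u^d-\zeta)\bigr)$ at $u=0,a,\infty$) is a slightly slicker way to package the same calculation and avoids writing out the principal part explicitly, but either route is short and elementary.
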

To prove the irreducibility of $q_{n,p}^j$, we need the following Lemma.
\begin{lemma}\label{compute}
For each $n,p\geq1$, $1\leq j\leq d-1$, the polynomial $q_{n,p}^j(c,0)$ (in the variable $c$) has degree $\nu_d(p)d^{n-2}$.
\end{lemma}
\begin{proof}
For $n=1$, we see that $q_{1,p}^j(c,0)=Q_{0,p}(c,0)$ from Corollary \ref{qij}. Then the result
follows directly from point 5 of Lemma \ref{period}.

For $n\geq2$, $q_{n,p}^j(c,0)=q^j_{1,p}(c,f^{n-1}(0))$. Since $\text{Deg}(q_{1,p}^j)=\text{deg}_z(q_{1,p}^j)=\nu_d(p)$ (see Lemma \ref{factor}) and $\text{Deg}(f^{n-1}_c(0))=d^{n-2}$ (which is easily checked), we have $$\text{Deg}(q_{n,p}^j(c,0))=\text{Deg}(q_{1,p}^j(c,z))\cdot\text{Deg}(f^{n-1}_c(0))=\nu_d(p)d^{n-2}$$ by (2) and (3) of Lemma \ref{degree}. Then the proof is completed.
\end{proof}

{\noindent\bf  Proof of Theorem \ref{irreducible}}.
The proof goes by induction on $n$.

For $n=1$, as $q_{1,p}^j(c,z)=Q_{0,p}(c,\omega^{-j}z)$ and $Q_{0,p}(c,z)$ is smooth and irreducible, we know that $q_{1,p}^j(c,z)$ are smooth and irreducible. Assume that for $1\leq l< n$, the polynomial $q_{l,p}^j(c,z)$ are smooth and irreducible. Then we will show that $q_{n,p}^j(c,z)$ are smooth and irreducible. Now fix any $j_0\in[1,d-1]$.

{\noindent\bf  Smoothness of $q_{n,p}^{j_0}$:} As $q_{n,p}^{j_0}(c,z)=q_{n-1,p}^{j_0}(c,f_c(z))$, for any $(c_0,z_0)$  a zero of $q_{n,p}^{j_0}(c,z)$, we have
\begin{equation}\label{pushforward}
\left\{\begin{array}{ll} \mystrut \dfrac{\partial q_{n,p}^{j_0}}{\partial c}(c_0,z_0)=\dfrac{\partial q_{n-1,p}^{j_0}}{\partial c}(c_0,w_0)+\dfrac{\partial q_{n-1,p}^{j_0}}{\partial z}(c_0,w_0)\vspace{0.1cm}\\
\dfrac{\partial q_{n,p}^{j_0}}{\partial z}(c_0,z_0)=\dfrac{\partial q_{n-1,p}^{j_0}}{\partial z}(c_0,w_0)\cdot f_{c_0}^\prime(z_0)\vspace{0.1cm}\\
\end{array}\right.\end{equation}
where $w_0=f_{c_0}(z_0)$. Then if $z_0\ne0$, by the smoothness of $\VVV_{n,p}^{j_0}$ (assumption of induction), $[\partial q_{n,p}^{j_0}/\partial c](c_0,z_0)$ and $[\partial q_{n,p}^{j_0}/\partial z](c_0,z_0)$ can not be equal to $0$ simultaneously, it follows that $q_{n,p}^{j_0}(c,z)$ is smooth at $(c_0,z_0)$. So we are left to prove that $q_{n,p}^{j_0}(c,z)$ is smooth at $(c_0,0)\in\VVV_{n,p}^{j_0}$. In this situation, $c_0$ is either a $p$-periodic super-attracting parameter or a $(n,p)$-Misiurewicz parameter, and $[\partial q_{n,p}^{j_0}/\partial z](c_0,0)=0$. So we have to show $[\partial q_{n,p}^{j_0}/\partial c](c_0,0)\ne0$.

In the former case $f_{c_0}^{n-1}(0)=0$, then $p|n-1$. Since $q_{n,p}^{j_0}(c,z)=q_{1,p}^{j_0}(c,f_c^{n-1}(z))$, we have
\begin{equation}\label{eq1}
\frac{\partial q_{n,p}^{j_0}}{\partial c}(c_0,0)=\frac{\partial q_{1,p}^{j_0}}{\partial c}(c_0,0)+\frac{\partial q_{1,p}^{j_0}}{\partial z}(c_0,0)\frac{\partial f_c^{n-1}}{\partial c}(c_0,0)
\end{equation}
Note that $Q_{0,p}(c_0,0)=0$ and $p|n-1$, then differentiating both sides of the equation
\[f_c^{n-1}(z)-z=\prod_{k|n-1}Q_{0,k}(c,z),\]
which is raised in point 1 of Lemma \ref{period}, with respect to $c$ and $z$ respectively at the point $(c_0,0)$, we have that
\begin{equation}\label{eq2}
\left\{
  \begin{array}{ccc}
    \dfrac{\partial f_c^{n-1}}{\partial c}(c_0,0)&=&\dfrac{\partial Q_{0,p}}{\partial c}(c_0,0)\ds\prod_{{k|n-1}\atop{k\not=p}}Q_{0,k}(c_0,0) \\[5pt]
    -1&=&\dfrac{\partial Q_{0,p}}{\partial z}(c_0,0)\ds\prod_{{k|n-1}\atop{k\not=p}}Q_{0,k}(c_0,0)
  \end{array}
\right.
\end{equation}
Since $q_{1,p}^{j_0}(c,z)=Q_{0,p}(c,\omega^{-j_0}z)$, then
\[\dfrac{\partial q_{1,p}^{j_0}}{\partial c}(c_0,0)=\dfrac{\partial Q_{0,p}}{\partial c}(c_0,0),\quad \dfrac{\partial q_{1,p}^{j_0}}{\partial z}(c_0,0)=\omega^{-j_0}\dfrac{\partial Q_{0,p}}{\partial z}(c_0,0).\]
By substituting these two formulas into equation (\ref{eq1}) and applying equation (\ref{eq2}), we find
\begin{eqnarray}\label{eq3}
\frac{\partial q_{n,p}^{j_0}}{\partial c}(c_0,0)&=&\frac{\partial Q_{0,p}}{\partial c}(c_0,0)+\omega^{-j_0}\dfrac{\partial Q_{0,p}}{\partial z}(c_0,0)\dfrac{\partial Q_{0,p}}{\partial c}(c_0,0)\ds\prod_{{k|n-1}\atop{k\not=p}}Q_{0,k}(c_0,0)\nonumber\\
&=&\frac{\partial Q_{0,p}}{\partial c}(c_0,0)\big(1+\omega^{-j_0}\dfrac{\partial Q_{0,p}}{\partial z}(c_0,0)\ds\prod_{{k|n-1}\atop{k\not=p}}Q_{0,k}(c_0,0)\big)\nonumber\\
&=&\frac{\partial Q_{0,p}}{\partial c}(c_0,0)(1-\omega^{-j_0}).
\end{eqnarray}
By point 5 of Lemma \ref{period}, $[\partial Q_{0,p}/\partial c](c_0,0)\not=0$, then so is $[\partial q_{n,p}^{j_0}/\partial c](c_0,0)$.

In the latter case, since
\[\dfrac{\partial Q_{n,p}}{\partial c}(c_0,0)=\prod_{1\leq j\ne j_0\leq d-1}q_{n,p}^j(c_0,0)\cdot\dfrac{\partial q_{n,p}^{j_0}}{\partial c}(c_0,0)\]
and the point $(c_0,0)$ is not a zero of $\prod_{j\ne j_0}q_{n,p}^j(c,z)$ by Lemma \ref{factor}, we only have to show $[\partial Q_{n,p}/\partial c](c_0,0)\ne0$. Furthermore, since
\[\dfrac{\partial \Phi_{n,p}}{\partial c}(c_0,0)=\Phi_{n-1,p}(c_0,0)\cdot\prod_{k|p,k<p}Q_{n,k}(c_0,0)\cdot\dfrac{\partial Q_{n,p}}{\partial c}(c_0,0)\]
and $\Phi_{n-1,p}(c_0,0)\cdot\prod_{k|p,k<p}Q_{n,k}(c_0,0)\ne0$, it is equivalent to show $[\partial \Phi_{n,p}/\partial c](c_0,0)\ne0$. We shall choose a meromorphic quadratic differential with simple poles such that
\[(f_{c_0})_*\Q=\Q+\dfrac{\partial \Phi_{n,p}}{\partial c}(c_0,0)\cdot\dfrac{dz^2}{z-c_0}.\]
Then by Lemma \ref{contract}, we obtain  $[\partial \Phi_{n,p}/\partial c](c_0,0)\ne0$.

We shall use the following notations:
\begin{eqnarray*}
z_k:=f_{c_0}^{\circ n+k}(0),\ &\delta_k:=f_{c_0}^\prime(z_k)=dz_k^{d-1},&\quad 0\leq k\leq p-1\\
y_{l}:=f_{c_0}^l(0),\quad\ &\varepsilon_l:=f_{c_0}^\prime(y_{l})=dy_{l}^{d-1},&\quad 1\leq l\leq n-1
\end{eqnarray*}
With these notations and a bit of calculations, we get
\begin{eqnarray*}
\dfrac{\partial \Phi_{n,p}}{\partial c}(c_0,0)&=&\dfrac{\partial f_c^{\circ(n+p)}}{\partial c}(c_0,0)-\dfrac{\partial f_c^{\circ n}}{\partial c}(c_0,0)\\
                                                                          &=&(\delta_0\cdots\delta_{p-1}-1)(\varepsilon_{n-1}\cdots\varepsilon_1+\cdots+\varepsilon_{n-1}\varepsilon_{n-2}+\varepsilon_{n-1}+1)\\[6pt]
                                                                          &&{}+\delta_{p-1}\cdots\delta_1+\cdots+\delta_{p-1}+1
\end{eqnarray*}
Denote $(\delta_0\cdots\delta_{p-1}-1)(\varepsilon_{n-1}\cdots\varepsilon_1+\cdots+\varepsilon_{n-1}\varepsilon_{n-2}+\varepsilon_{n-1}+1)$ by $\alpha$.
Let
\[\Q=\sum_{k=0}^{p-1}\dfrac{\rho_k}{z-z_k}dz^2+\sum_{l=1}^{n-1}\dfrac{\lambda_l}{z-y_l}dz^2\]
be a quadratic differential in $\Q(\C)$. Here $\rho_k\ (0\leq k\leq p-1),\ \lambda_l\ (1\leq l\leq n-1)$ are undetermined coefficients (note that $y_1=c_0$). Applying Lemma \ref{levin} and writing $f$ for $f_{c_0}$, we have
\begin{eqnarray*}
f_*\Q&=&\sum_{k=0}^{p-1}\dfrac{\rho_k}{\delta_k}\left(\dfrac{dz^2}{z-z_{_{k+1}}}-\dfrac{dz^2}{z-c_0}\right)+\sum_{l=1}^{n-2}\dfrac{\lambda_l}{\varepsilon_l}\left(\dfrac{dz^2}{z-y_{_{l+1}}}-\dfrac{dz^2}{z-c_0}\right)+\dfrac{\lambda_{n-1}}{\varepsilon_{n-1}}\left(\dfrac{dz^2}{z-z_0}-\dfrac{dz^2}{z-c_0}\right)\\
         &=&\left(\dfrac{\rho_{p-1}}{\delta_{p-1}}+\dfrac{\lambda_{n-1}}{\varepsilon_{n-1}}\right)\dfrac{dz^2}{z-z_0}+\dfrac{\rho_0}{\delta_0}\dfrac{dz^2}{z-z_1}+\cdots+\dfrac{\rho_{p-2}}{\delta_{p-2}}\dfrac{dz^2}{z-z_{p-1}}\\
         & &{}+\left(\alpha-\sum_{l=1}^{n-1}\dfrac{\lambda_l}{\varepsilon_l}\right)\dfrac{dz^2}{z-y_1}+\dfrac{\lambda_1}{\varepsilon_1}\dfrac{dz^2}{z-y_2}+\cdots+\dfrac{\lambda_{n-2}}{\varepsilon_{n-2}}\dfrac{dz^2}{z-y_{n-1}}-\left(\alpha+\sum_{k=0}^{p-1}\dfrac{\rho_k}{\delta_k}\right)\dfrac{dz^2}{z-c_0}
          \end{eqnarray*}
We want to choose $\Q$ so that $$f_*\Q-\Q=-\left(\alpha+\sum_{k=0}^{p-1}\dfrac{\rho_k}{\delta_k}\right)\dfrac{dz^2}{z-c_0}$$
It amounts then to solve the following linear system on the unknown coefficient vector $(\rho_0,\ldots,\rho_{p-1},\lambda_1,\ldots,\lambda_{n-1}):$
\[\left(\begin{array}{cccccccccccc}
\frac{1}{\delta_0}&-1&\\
                                &\ \ \cdot&\ \ \cdot\\
                                 & &\ \ \cdot&\ \ \cdot\\
                                 & & &\ \ \cdot&\ \ \cdot\\
                                 & & & &\frac{1}{\delta_{_{p-2}}}&-1\\
                              -1& & & & &\frac{1}{\delta_{_{p-1}}}& & & & & &\frac{1}{\varepsilon_{_{n-1}}}\\
                                 & & & & & &1+\frac{1}{\varepsilon_1}&\frac{1}{\varepsilon_2}&\frac{1}{\varepsilon_3}&\cdots&\frac{1}{\varepsilon_{_{n-2}}}&\frac{1}{\varepsilon_{_{n-1}}}\\
                                 & & & & & &\frac{1}{\varepsilon_1}&-1\\
                                 & & & & & & &\ \ \cdot&\ \ \cdot\\
                                 & & & & & & & &\ \ \cdot&\ \ \cdot\\
                                 & & & & & & & & &\ \ \cdot&\ \ \cdot\\
                                 & & & & & & & & & &\frac{1}{\varepsilon_{_{n-2}}}&-1
                                 \end{array}\right)
   \left(\begin{array}{c}
   \rho_0\\
   \cdot\\
   \cdot\\
   \cdot\\
   \rho_{_{p-2}}\\
   \rho_{_{p-1}}\\
   \lambda_1\\
   \lambda_2\\
   \cdot\\
   \cdot\\
   \cdot\\
   \lambda_{_{n-1}}
   \end{array} \right)=
   \left(\begin{array}{c}
   0\\
   \cdot\\
   \cdot\\
   \cdot\\
   0\\
   0\\
   \alpha\\
   0\\
   \cdot\\
   \cdot\\
   \cdot\\
   0\\
   \end{array}\right)
\]
Denote by $A$ the coefficient matrix, we have
\[\text{det}(A)=\dfrac{(-1)^{n-1}\alpha}{\delta_0\cdots\delta_{p-1}\cdot\varepsilon_1\cdots\varepsilon_{n-1}}\]
Then whether $\alpha=0$ or not, this linear system has   non-zero solutions, and one of its solutions is
\begin{equation}\label{solution}
\left\{\begin{array}{lll}
\rho_0&=&\delta_0\cdots\delta_{p-1}\\
\rho_1&=&\delta_1\cdots\delta_{p-1}\\
\vdots\\
\rho_{p-1}&=&\delta_{p-1}\\
\lambda_1&=&(\delta_0\cdots\delta_{p-1}-1)\cdot\varepsilon_{n-1}\cdots\varepsilon_1\\
\vdots\\
\lambda_{n-2}&=&(\delta_0\cdots\delta_{p-1}-1)\cdot\varepsilon_{n-1}\varepsilon_{n-2}\\
\lambda_{n-1}&=&(\delta_0\cdots\delta_{p-1}-1)\cdot\varepsilon_{n-1}
\end{array}\right.
\end{equation}
Therefore, for $(\rho_0,\ldots,\rho_{p-1},\lambda_1,\ldots,\lambda_{n-1})$ satisfies (\ref{solution}), we have
\begin{eqnarray*}
f_*\Q-\Q&=&-\left(\alpha+\sum_{k=0}^{p-1}\dfrac{\rho_k}{\delta_k}\right)\dfrac{dz^2}{z-c_0}=-\dfrac{\partial \Phi_{n,p}}{\partial c}(c_0,0)\cdot \dfrac{dz^2}{z-c_0}
\end{eqnarray*}
As a consequence $[\partial \Phi_{n,p}/\partial c](c_0,0)\ne 0$.

{\noindent\bf Irreducibility of $q_{n,p}^{j_0}$:}  For $n\geq2$, $q_{n,p}^j(c,z)$ is defined by $q_{n,p}^j(c,z)=q_{n-1,p}^j(c,f_c(z))$. Interpreting these equations from a topological view, we obtain a sequence of maps
  $$\Big\{\wp_{n,p}^j:\mathcal{V}_{n,p}^j\longrightarrow \VVV_{n-1,p}^j,\ (c,z)\mapsto (c,f_c(z)) \mid n\geq2,\ p\geq1,\ 1\leq j\leq d-1\Big\}.$$ Note that for $n=1$, we can also define a map $\wp_{1,p}^j: \VVV_{1,p}^j\rightarrow \mathcal{X}_{0,p}$ by $\wp_{1,p}^j(c,z)=(c,f_c(z))$. By the smoothness of $\VVV_{n,p}^j$, we can check the following results.

\begin{itemize}
\item The map $\wp_{1,p}^j:\VVV_{1,p}^j\rightarrow \mathcal{X}_{0,p}$ is a homeomorphism. To see this, notice that $q_{1,p}^j(c,z)=Q_{0,p}(c,\omega^{-j}z)$ (Corollary \ref{qij}), so we can define a map $\phi_{1,p}^j$ from $\mathcal{X}_{0,p}$ to $\VVV_{1,p}^j$ by mapping a point $(c_0,w_0)\in \mathcal{X}_{0,p}$ to $(c_0,\omega^jz_0)\in \VVV_{1,p}^j$, where $z_0$ is the point in the orbit of $w_0$ under $f_{c_0}$ with $f_{c_0}(z_0)=w_0$. By a simple computation, we can see that $\phi_{1,p}^j\circ\wp_{1,p}^j=id_{\VVV_{1,p}^j}$ and $\wp_{1,p}^j\circ\phi_{1,p}^j=id_{\mathcal{X}_{0,p}}$. Hence $\wp_{1,p}^j$ is a homeomorphism.
\item For $n\geq2$, the map $\wp_{n,p}^j:\mathcal{V}_{n,p}^j\rightarrow \VVV_{n-1,p}^j$ is a degree $d$ branched covering with critical  set $$D_{n,p}^j=\bigl\{(c,0)\mid q_{n,p}^j(c,0)=0\bigr\}.$$
and each critical point has multiplicity $d-1$.

In fact,  a point $(c_0,w_0)\in \VVV_{n-1,p}^j\setminus \wp(D_{n,p}^j)$ has $d$ preimages $(c_0,z_1),\ldots,(c_0,z_d)$ under $\wp_{n,p}^j$, where $z_1,\ldots,z_d$ are preimages of $w_0$ under $f_{c_0}$. Fix $i\in[1,d]$. If $[\partial q_{n,p}^j/\partial z](c_0,z_i)\not=0$, then by equation (\ref{pushforward}), $[\partial q_{n-1,p}^j/\partial z](c_0,w_0)\not=0$. It implies that some neighborhoods of $(c_0,z_i)$ and $(c_0,w_0)$ can be parameterized by $c$ respectively. Using such two local coordinates, the map $\wp_{n,p}^j$ has a local expression $c\mapsto c$ near $(c_0,z_i)$, which means that $\wp_{n,p}^j$ is a local homeomorphism near $(c_0,z_i)$. If $[\partial q_{n,p}^j/\partial z](c_0,z_i)=0$, then by equation (\ref{pushforward}), the fact of $z_i\not=0$ and the smoothness of $q_{n,p}^j$, we have that $[\partial q_{n-1,p}^j/\partial z](c_0,w_0)=0$ and
\[\dfrac{\partial q_{n,p}^{j}}{\partial c}(c_0,z_i)=\dfrac{\partial q_{n-1,p}^{j}}{\partial c}(c_0,w_0)\not=0\]
It implies that some neighborhoods of $(c_0,z_i)$ and $(c_0,w_0)$ can be parameterized by $z$ respectively, and $c'(z_i)=0$.  Using such two local coordinates, the map $\wp_{n,p}^j$ has a local expression $z\mapsto f_{c(z)}(z)$ near $(c_0,z_i)$. Since $z_i\not=0$, then $\dfrac{df_{c(z)}(z)}{dz}\big|_{z=z_i}=dz_i\not=0$, which still means that $\wp_{n,p}^j$ is a local homeomorphism near $(c_0,z_i)$.

By the discussion above, we can see that $$\wp_{n,p}^j:\VVV_{n,p}^j\setminus (\wp_{n,p}^j)^{-1}(\wp(D_{n,p}^j))\to \VVV_{n-1,p}^j\setminus \wp(D_{n,p}^j)$$
 is a degree $d$ covering. On the other hand, for any point in $\wp_{n,p}^j(D_{n,p}^j)$, it has only one preimage, which belongs to $D_{n,p}^j$. Hence we have that $\wp:\VVV_{n,p}^j\to\VVV_{n-1,p}^j$ is a degree $d$ branched covering (because $(\wp_{n,p}^j)^{-1}(\wp(D_{n,p}^j))=D_{n,p}^j$ and $D_{n,p}^j$ is finite) and the local degree of $\wp_{n,p}^j$ at each point of $D_{n,p}^j$ is $d$.
\end{itemize}
By the smoothness of $q_{n,p}^{j_0}(c,z)$ and the inductive assumption of irreducibility, we know that
 $\VVV_{n-1,p}^{j_0}$ and each connected component of $\VVV_{n,p}^{j_0}$ is a Riemann surface. Then  the restriction of $\wp_{n,p}^{j_0}$ on any connected component of $\VVV_{n,p}^{j_0}$ is also a branched covering.
Lemma \ref{compute} implies that the critical set $D_{n,p}^{j_0}$ of $\wp_{n,p}^{j_0}$
is non-empty. Since each critical point has multiplicity $d-1$, the set  $\VVV_{n,p}^{j_0}$ must be connected. By Lemma \ref{relation} and the smoothness of $q_{n,p}^{j_0}$, we conclude that $q_{n,p}^{j_0}(c,z)$ is irreducible in $\C[c,z]$.

\hspace*{\fill}$\square$

\section{Genus of the compactification of $\mathcal{V}_{n,p}^j$}

In the previous section, we have seen that $\mathcal{X}_{n,p}$ consists of  $d-1$ Riemann surfaces $\mathcal{V}_{n,p}^j$ which are pairwise  intersecting at the singular points of $\mathcal{X}_{n,p}$. In order to give a complete topological description of $\mathcal{X}_{n,p}$, we also need the topological characterization of each $\mathcal{V}_{n,p}^j$.

In fact, by adding an ideal boundary point at each end of $\mathcal{V}_{n,p}^j$, we obtain a compactification of $\mathcal{V}_{n,p}^j$, denoted by $\widehat{\mathcal{V}}_{n,p}^j$, such that $\widehat{\mathcal{V}}_{n,p}^j$
is a compact Riemann surface (in $\S$ \ref{4.2} ).  The genus of  $\widehat{\mathcal{V}}_{n,p}^j$ is calculated (in $\S$ \ref{4.3}). Topologically, $\mathcal{X}_{n,p}$ is  completely determined by the number of its singular points,  the genus of  $\widehat{\mathcal{V}}_{n,p}^j$ and the number of ideal boundary points added on $\mathcal{V}_{n,p}^j$ (or the number of ends of $\mathcal{V}_{n,p}^j$).

\subsection{Compactification of $\mathcal{V}_{n,p}^j$}\label{4.2}

Denote by $\pi_{n,p}^j\colon \mathcal{V}_{n,p}^j\to \C$ the projection from $\mathcal{V}_{n,p}^j$ to the parameter plane, i.e., $\pi_{n,p}^j(c,z)=c$. It is easy to see
\begin{equation}\label{map}
\pi_{n,p}^j=\pi_{0,p}\circ\wp_{1,p}^j\circ\cdots\circ\wp_{n-1,p}^j\circ\wp_{n,p}^j
\end{equation}
where  $\pi_{0,p}$ is the projection from $\mathcal{X}_{0,p}$ to the parameter plane and $\wp_{n,p}^j$ is defined in the proof of irreducibility.
 It follows that  $\pi_{n,p}^j$
is a degree $\nu_d(p)d^{n-1}$ branched covering.
To study the critical points of $\pi_{n,p}^j$, we define a subset $C_{n,p}^{\text{crit}}(\text{singular})$ of $C_{n,p}(\text{singular})$ by
\begin{equation}\label{singularcritical}
C_{n,p}^{\text{crit}}(\text{singular})=\{(c,z)\in C_{n,p}(\text{singular})\mid f^l_c(z)=0\text{ for some $0\leq l\leq n-2$}\}
\end{equation}

\begin{lemma}\label{critical}
For any $l,p\geq1$, the critical set of $\pi_{l,p}^j$ is the union of $C_{l,p}^j(\text{primitive})$, $C_{l,p}^j(\text{satellite})$, $C_{l,p}^j(\text{Misiurewicz})$ and $C_{l,p}^{\text{crit}}(\text{singular})$, where $C_{l,p}^j(\text{M}):=C_{l,p}(\text{M})\cap \mathcal{V}_{l,p}^j$ and M indicates
different properties.
\end{lemma}
\begin{proof}
We first note that $(c_0,z_0)$ is a critical point of $\pi_{l,p}^j$ if and only if $[\partial q_{l,p}^j/\partial z](c_0,z_0)=0$. By part 4 of Lemma \ref{period} and the fact that $\wp_{1,p}^j$ is homeomorphic (which is shown in the proof of irreducibility of $q_{l,p}^j$), the critical set of $\pi_{1,p}^j$ is $C_{1,p}^j(\text{primitive})\cup C_{1,p}^j(\text{satellite})$. In the case $l=1$, $C_{l,p}(\text{Misiurewicz})$ and $C_{l,p}^{\text{crit}}(\text{singular})$ are empty.

 For $l\geq 2$, by Corollary \ref{qij}, we have $q_{l,p}^j(c,z)=q_{1,p}^j(c,f^{l-1}_c(z))$. Then a point $(c_0,z_0)$ is critical for $\pi_{l,p}^j$ if and only if
\[\dfrac{\partial q_{l,p}^j}{\partial z}(c_0,z_0)=\dfrac{\partial q_{1,p}^j}{\partial z}(c_0,f^{l-1}_{c_0}(z_0))\cdot(f_{c_0}^{l-1})'(z_0)=0\]
It is equivalent that either $(c_0,f_{c_0}^{l-1}(z_0))$ is a critical point of $\wp_{1,p}^j$ or $f_{c_0}^l(z_0)=0$ for some $0\leq q\leq n-2$. By Proposition \ref{define}, the former case happens if and only if $(c_0,z_0)\in C_{l,p}^j(\text{primitive})\cup C_{l,p}^j(\text{satellite})$, and the latter case happens if and only if $(c_0,z_0)\in C_{l,p}^j(\text{Misiurewicz})\cup C_{l,p}^{\text{crit}}(\text{singular})$.
\end{proof}

 From this Lemma, we see that the critical value set of $\pi_{n,p}^j$ is contained in the union of parabolic, super-attracting and Misiurewicz parameters.
Hence $\C\setminus M_d$ contains no critical values. It follows that  each connected component of $(\wp_{n,p}^j)^{-1}(\C\setminus M_d)$, called an $end$ of $\VVV_{n,p}^j$, is conformal to $\C\setminus \overline{\D}$. By adding an ideal boundary point at the infinitely far boundary, each end of $\VVV_{n,p}^j$ is conformal to the unit disk, and then $\VVV_{n,p}^j$ becomes a compact Riemann surface. This gives a kind of compactification of $\VVV_{n,p}^j$ and we will calculate in the next subsection the genus of this compact Riemann surface.

More precisely, set $\{\ \mathcal{E}_{n,p,i}^j\}$ ($1\leq i\leq m_{n,p}^j$)  the ends of $\VVV_{n,p}^j$. Denote by $\infty_{n,p,i}^j$ the  point added at the infinitely far boundary of $\EEE_{n,p,i}^j$. Then the surface $\widehat{\mathcal{V}}_{n,p}^j:=\mathcal{V}_{n,p}^j\cup \{\infty_{n,p,i}^j\}_{i=1}^{m_{n,p}^j}$ is a compactification of $\VVV_{n,p}^j$ and $\widehat{\EEE}_{n,p,i}^j:=\EEE_{n,p,i}^j\cup\{\infty_{n,p,i}^j\}$ is  called an {\it end} of $\widehat{\VVV}_{n,p}^j$. In this case, the map $\pi_{n,p}^j$ can be extended to
 \[\widehat{\pi}_{n,p}^j: \widehat{\VVV}_{n,p}^j\longrightarrow\widehat{\C}\]
 by setting $\widehat{\pi}_{n,p}^j(\infty_{n,p,i}^j)=\infty$.

 \subsection{Calculation of the genus of $\widehat{\mathcal{V}}_{n,p}^j$}\label{4.3}

Now, for any $n,p\geq1,\ j\in[1,d-1]$, we have obtained a branched covering $\widehat{\pi}_{n,p}^j\colon \widehat{\mathcal{V}}_{n,p}^j\to \widehat{\C}$ of degree $\nu_d(p)d^{n-1}$ between two compact Riemann surface. By the Riemann-Hurwitz formula, we have
\[2-2g_{n,p}^j+\text{ total number of critical points of }\widehat{\pi}_{n,p}^j=2\nu_d(p)d^{n-1}.\]
where $g_{n,p}^j$ denotes the genus of $\widehat{\mathcal{V}}_{n,p}^j$.  So in order to calculate the genus of $\widehat{\mathcal{V}}_{n,p}^j$, we only need to count the number of  critical points of $\widehat{\pi}_{n,p}^j$ counting with multiplicity.  By Lemma \ref{critical}, we know that the critical points of $\widehat{\pi}_{n,p}^j$ consists of the points of $C_{n,p}^j(\text{primitive})$, $C_{n,p}^j(\text{satellite})$, $C_{n,p}^j(\text{Misiurewicz})$, $C_{n,p}^{\text{crit}}(\text{singular})$ and maybe some added ideal boundary points. So  we will count them class by class.

\begin{itemize}
\item Counting the points of $C_{n,p}^j(\text{primitive})$ and $C_{n,p}^j(\text{satellite})$.

In \cite{B}, Bousch counts the number of critical points in $C_{0,p}(\text{primitive})$ and $C_{0,p}(\text{satellite})$. His argument can be directly extended to our case (see also \cite[Thm. 4.17]{Sil}).
so we only list the result without the counting process. The number of critical points  counted with multiplicity of $\widehat{\pi}_{n,p}^j$ in $C_{n,p}^j(\text{primitive})$ and $C_{n,p}^j(\text{satellite})$ are
\[d^{n-1}p\bigl[(d-1)\nu_d(p)/d-\sum_{k|p,k<p}(\nu_d(k)/d)(d-1)\varphi(p/k)\bigr]\]
and
\[d^{n-1}\sum_{k|p,k<p}(\nu_d(k)/d)(d-1)\varphi(p/k)k(p/k-1).\]

\item Counting the points of $C_{n,p}^j(\text{Misiurewicz})$.

Recall that $D_{s,p}^j=\{(c,0)\in\C^2\big|q_{s,p}^j(c,0)=0\},s\geq 2,$ is the set of critical points of $\wp_{s,p}^j$. By Proposition \ref{define}, if $(c,0)\in D_{s,p}^j$, then $c$ is either a $(s,p)$-Misiurewicz parameter or a $p$-super-attracting parameter. So we divide the set $D_{s,p}^j$ into two sets
$$D_{s,p}^j(\text{Misiurewicz})=\{(c,0)\in D_{s,p}^j\mid \text{c is a Misiurewicz parameter}\}$$ and
$$D_{s,p}^j(\text{period})=\{(c,0)\in D_{s,p}^j\mid \text{c is a super-attracting parameter}\}$$
By the definition of $C_{n,p}^j(\text{Misiurewicz})$, we have
$$C_{n,p}^j(\text{Misiurewicz})=\bigcup_{s=2}^n(h_{n,s,p}^j)^{-1}(D_{s,p}^j(\text{Misiurewicz})),$$ where
$h_{n,s,p}^j:=\wp_{s+1,p}^j\circ\cdots\circ\wp_{n,p}^j:\mathcal{V}_{n,p}^j\longrightarrow \mathcal{V}_{s,p}^j$

Fix any $s\in[2,n]$. Since  the degree of $q_{s,p}^j(c,0)$ is $\nu_d(p)d^{s-2}$ (Lemma \ref{compute}) and  $[\partial q_{s,p}^j/\partial c](c,0)\not=0$ at each $(c,0)\in D_{s,p}^j$ (this point is shown in the proof of  smoothness of $q_{s,p}^j(c,z)$), we get $\#D_{s,p}^j=\nu_{d}(p)d^{s-2}$. By point (5) of Proposition \ref{define},  the set $D_{s,p}^j(\text{period})$ is non-empty if and only if $p|s-1$. In this case, we also see that $D_{s,p}^j(\text{period})=\{(c,0)\mid Q_{0,p}(c,0)=0\}$, then $\# D_{s,p}^j(\text{period})$ equals to $\nu_d(p)/d$ if $p|s-1$ and $0$ otherwise. It follows that
\[\# D_{s,p}^j(\text{Misiurewicz})=\left\{
                                     \begin{array}{ll}
                                       \nu_d(p)d^{s-2}, & \hbox{if $p\nmid s-1$;} \\
                                       \nu_d(p)d^{s-2}-\nu_d(p)/d, & \hbox{if $p| s-1$.}
                                     \end{array}
                                   \right.
\]
 Note that the critical value set of $h_{n,s,p}^j$ is disjoint from $D_{s,p}^j(\text{Misiurewicz})$, so $$\#(h_{n,s,p}^j)^{-1}(D_{s,p}^j(\text{Misiurewicz}))=\#D_{s,p}^j(\text{Misiurewicz})\cdot d^{n-s}$$ and each point in $(h_{n,s,p}^j)^{-1}(D_{s,p}^j(\text{Misiurewicz}))$ is a critical point of $\widehat{\pi}_{n,p}^j$ with multiplicity $d-1$.  Therefore the  number of critical points counting with multiplicity of $\widehat{\pi}_{n,p}^j$ in $C_{n,p}^j(\text{Misiurewicz})$, denoted by $M_{n,p}$, is equal to
\begin{eqnarray}\label{number1}
M_{n,p}:&=&\sum_{s=2}^n\#D_{s,p}^j(\text{Misiurewicz})\cdot d^{n-s}\cdot(d-1)\nonumber\\
&=&\nu_d(p)d^{n-2}(d-1)\big(n-1-\sum_{t=1}^{[\frac{n-1}{p}]}d^{-tp}\big).
\end{eqnarray}

\item Counting the points of $C_{n,p}^{\text{crit}}(\text{singular})$.

   Recall that $C_{n,p}^{\text{crit}}(\text{singular})$ consists of points of the form $(c,z)$ with $f_c^{n-1}(z)=0$ and such that there exists $l$ between $0$ and $n-2$ (both included) with $f^l(z)=0$ and s.t. $0$ is $p$-periodic.

 We divide the set $C_{n,p}(\text{singular})$
into several  subsets $C_{n,p}^t(\text{singular})$ which consists of points $(c,z)\in C_{n,p}(\text{singular})$ such that
\[ n-1-tp=\min\{\ l\mid f^l_c(z)=0\ \}\]
The index $t$  can take the values $0,\ldots, [\frac{n-1}{p}]$, where $[x]$ denotes the maximal integer  less than or equal to $x$, and the sets $C_{n,p}^t$ are pairwise disjoint and form a partition of $C_{n,p}(\text{singular})$.  From (\ref{singularcritical}), we see that $C_{n,p}^{\text{crit}}(\text{singular})$ is the union of  $C_{n,p}^t(\text{singular}), t\geq1$. Then we get $\#C_{n,p}^{\text{crit}}(\text{singular})=0$ if $n-1< p$. So in the following discussion, we only concern the case of $n-1\geq p$, i,e., $[\frac{n-1}{p}]\geq 1$.

Let $t\geq1$. A point $(c,z)\in C_{n,p}^t(\text{sigular})$ if and only if $(c,0)\in D_{tp+1,p}^j(\text{period})$, $f_c^{n-1-tp}(z)=0$ and $(f_c^{n-1-tp})'(z)\not=0$. Hence $$C_{n,p}^t(\text{sigular})=(h_{n,tp+1,p}^j)^{-1}(D_{tp+1,p}^j(\text{period}))\setminus(h_{n,(t+1)p+1,p}^j)^{-1}(D_{(t+1)p+1,p}^j(\text{period}))$$
if $(t+1)p+1\leq n$, and $C_{n,p}^t(\text{sigular})=(h_{n,tp+1,p}^j)^{-1}(D_{tp+1,p}^j(\text{period}))$ otherwise. So we have
\[\#C_{n,p}^t(\text{sigular})=\left\{
                         \begin{array}{ll}
                           d^{n-1-tp}\cdot \nu_d(p)/d, & \hbox{ if $t=[\frac{n-1}{p}]$;} \\
                           d^{n-1-tp}\cdot \nu_d(p)/d-d\cdot d^{n-1-(t+1)p}\cdot \nu_d(p)/d, & \hbox{ if $1\leq t<[\frac{n-1}{p}]$.}
                         \end{array}
                       \right.
\]
On the other hand, the map $h_{n,tp+1,p}^j:\VVV_{n,p}^j\to \VVV_{tp+1,p}^j$ is injective in a neighborhood of any point $(c,z)\in C_{n,p}^t(\text{sigular})$, and the map $\pi_{kp+1,p}^j:\VVV_{tp+1}^j\to\C$ has the local degree $d^t$ at the point $(c,0)$, so the number of critical points counting with multiplicity in $C_{n,p}^t(\text{sigular})$ is $(d^{t}-1)\#C_{n,p}^t(\text{sigular})$. Then the total number of critical points counting with multiplicity in $C_{n,p}(\text{sigular})$, in the case of $[\frac{n-1}{p}]\geq 1$, is
\begin{eqnarray}\label{number2}
K_{n,p}:&=&\sum_{t=1}^{[\frac{n-1}{p}]}(d^{t}-1)\#C_{n,p}^t(\text{sigular})\nonumber\\
&=&\nu_d(p)(d^{p-1}-1)d^{n-1-p}(\xi_{n,p}-\zeta_{n,p})+(d^{[\frac{n-1}{p}]}-1)\nu_d(p)d^{n-2-[\frac{n-1}{p}]p}
\end{eqnarray}
where $\xi_{n,p}:=\sum_{t=1}^{[\frac{n-1}{p}]-1} d^{-t(p-1)}$ and $\zeta_{n,p}:=\sum_{t=1}^{[\frac{n-1}{p}]-1}d^{-pt}$.

Note that when $[\frac{n-1}{p}]=0$, the number computed by formula (\ref{number2}) is $0$, which is still equal to the number of $C_{n,p}^{\text{crit}}(\text{singular})$. So the number $K_{n,p}$, defined by (\ref{number2}), is equal to the number of critical points
counting with multiplicity in $C_{n,p}^{\text{crit}}(\text{singular})$ in all cases.

\item Counting the ideal boundary points.

In \cite{B}, \cite{Mil1}, Bousch and Milnor show that the local degree of $\pi_{0,p}$ at each ideal boundary point is  $2$ (in the case of $d=2$) by analysing
the asymptotic behavior of $f_c(z)$ as $(c,z)$ goes to an ideal boundary point on $\mathcal{X}_{0,p}$. Their argument can be easily generalized to our case with degree $d\geq2$. Just to be self-contained we give an alternative  proof using the monodromy action (Lemma \ref{compactification} below).
 By Lemma \ref{compactification}, the local degree of $\widehat{\mathcal{V}}_{n,p}^j$ at each ideal boundary point is $d$. It follows that the number of ideal boundary points is $\nu_d(p)d^{n-2}$ because $\widehat{\pi}_{n,p}^j$ is a degree $\nu_d(p)d^{p-1}$ branched covering.
So the number of critical points counting with multiplicity is equal to $\nu_d(p)d^{n-2}(d-1)$.
\end{itemize}
By the Riemann-Hurwitz formula, we have
\[g_{n,p}^j=1+\dfrac{1}{2}\nu_d(p)d^{n-2}(pd-p-1-d)+\dfrac{1}{2}(M_{n,p}+K_{n,p})-\frac{1}{2}d^{n-2}(d-1)\sum_{k|p,k<p}k\nu_d(k)\varphi(p/k).\]
Here is a genus computation of some examples.
\begin{center}
\begin{tabular}{|c|c|c|c|c|c|c|}
\hline
\ \ $d$\ \ &\ \ $n$\ \ &\ \ $p$\ \ &\ $\nu_d(p)$\ &\ $M_{n,p}$\ &\ $K_{n,p}$\ &\ $g_{n,p}$\ \\\hline
3&1&1&3&0&0&0 \\\hline
3&2&1&3&4&2& 1\\\hline
2&2&2&2&2&0& 0\\\hline
2&3&2&2&7&1& 1\\\hline
2&2&3&6&6&0&2\\\hline
\end{tabular}
\end{center}

\begin{corollary}
Fix $n,p\geq1$, the surfaces $\VVV_{n,p}^j, 1\leq j\leq d-1$ are pairwise homeomorphic.
\end{corollary}
\begin{proof}
Topologically, the surface $\VVV_{n,p}^j$ is completely determined by the genus  and the number of ideal boundary points of $\widehat{\VVV}_{n,p}^j$, whereas these two numbers are independent of $j$.
\end{proof}

\begin{lemma}\label{compactification}
All ideal boundary points are critical points of $\widehat{\pi}_{n,p}^j$ with multiplicity $d-1$.
\end{lemma}
\begin{proof}
 To prove this lemma, we first give a symbolic description of the dynamics on the filled-in Julia set for a parameter outside the Multibrot set.

If $c\in \C\smm M_{d}$, the Julia set of $f_c$ is a Cantor set. If $c\in R_{M_{d}}(\theta)$ with $\theta\neq 0$ not necessarily periodic,  the dynamical rays $R_c(\theta/d)\ldots R_c\bigl((\theta+d-1)/d\bigr)$ bifurcate on the critical point. The set $R_c(\theta/d)\cup \ldots \cup R_c\bigl((\theta+d-1)/d\bigr)\cup \{0\}$ decomposes the complex plane into $d$ connected components. We denote by $U_0$ the component containing the dynamical ray $R_c(0)$ and by $U_1,\ldots,U_{d-1}$ the other components in counterclockwise order.

The orbit of a point $x\in K_c$ has an itinerary with respect to this partition. In other words, to each $x\in K_c$, we can associate a sequence in $ \Z_d^{\scriptsize\N}$ whose $j$-th entry is equal to $k$ if $f_c^{\circ j-1}(x)\in U_k$ . This gives a map $\iota_c:K_c\to \Z_d^{\scriptsize\N}$, which is bijective for any $c\in\C\setminus M_d$. Moreover, the dynamic of $f_c$ on $K_c$ is conjugate to shift on $\Z_d^{\scriptsize\N}$ via the map $\iota_c$.

Now let $\pi:=\pi_{n,p}^j\big|_{\EEE_{n,p,i}^j}$. The map $\pi:\EEE_{n,p,i}^j\longrightarrow\C\setminus M_d$ is a covering of degree $d_{n,p,i}^j$. Fix $c_0\in\C\setminus (M_d\cup R_{M_d}(0)),\ d_{n,p,i}^j=\#(\pi^{-1}(c_0))$. Since $\EEE_{n,p,i}^j$ is connected, the monodromy group induced by $\pi$, denoted by Mon$\left(\pi\right)$, acts on $\pi^{-1}(c_0)$ transitively. Then fixing any point $(c_0,z_0)\in \pi^{-1}(c_0)$,  the set $\pi^{-1}(c_0)$ is exactly the orbit of $(c_0,z_0)$ under Mon$\left(\pi\right)$.

Let $c_t:[0,1]\rightarrow\C\setminus M_d$ be a oriented simple closed curve based at $c_0$ such that $c_t$ intersects  $R_{M_d}(0)$ at only one point $c_{t_0}$.
Let $z_t$ be the $(n,p)$-preperiodic point of  $f_{c_t}$ obtained from the analytic continuation of $z_0$ along $c_t$. Note that as $c$ varies in $\C\setminus (M_d\cup R_{M_d}(0))$,
the $(n,p)$-preperiodic points of $f_c$, the dynamical rays $R_c(0)$ and $R_c\bigl((\theta_c+s)/d\bigr)\ (s\in\Z_d )$ move continuously. Consequently, we have
\[\left\{
\begin{array}{ccc}
\iota_{c_t}(z_t)&=&\iota_{c_0}(z_0)\qquad\text{for }t\in\left[0,\right.\left.t_0\right)\\
\iota_{c_t}(z_t)&=&\iota_{c_0}(z_1)\qquad\text{for }t\in\left(t_0,\right.\left.1\right]
\end{array}
\right.\]
Furthermore, on one hand, $z_t$ and $R_{c_t}(0)$ move continuously for $t\in[0,1]$. On the other hand, when $c_t$
passes through $R_{M_d}(0)$, the dynamical rays $R_{c_t}\bigl((\theta_t+s)/d\bigr)\ (s\in\Z_d)$ move discontinuously and jump from $R_{c_{t_-}}\bigl((\theta_{t_-}+s)/d\bigr)$ to $R_{c_{t_+}}\bigl((\theta_{t_+}+s+1)/d\bigr),\ t_{-}<t_0<t_+$. So if $\iota_{c_0}(z_0)=\beta_n\ldots\beta_1\overline{\epsilon_1\ldots\epsilon_p}$, then
\begin{equation}\label{1111}
\iota_{c_0}(z_1)=(\beta_n+1)\ldots(\beta_1+1)\overline{(\epsilon_1+1)\ldots(\epsilon_p+1)}
\end{equation}
Hence the map $\sigma_{c_t}$, an element of Mon$(\pi)$ induced by $c_t$, maps $(c_0,z_0)$ to $(c_0,z_1)$ with $z_1$ satisfying (\ref{1111}). Since $\pi_1(\C\setminus M_d,c_0)=\langle c_t\rangle$, then we have
\[ \left(\pi_{n,p}^j\big|_{\EEE_{n,p,i}^j}\right)^{-1}(c_0)=\bigl\{(c_0,z)\big|\iota_{c_0}(z)=(\beta_n+s)\ldots(\beta_1+s)\overline{(\epsilon_1+s)\ldots(\epsilon_p+s)},\ s\in\Z_d\bigr\}\]
As a consequence, $d_{n,p,i}^j=d$.
\end{proof}

\section{The Galois group of $Q_{n,p}(c,z)$}

The objective here is to study $\mathcal{X}_{n,p}$ from the algebraic point of view by calculating  its associated Galois group.

 Recall that  $\mathbf{C}$ denotes the ring $\C[c]$ and $\mathbf{K}$ is a fixed algebraically closed field containing $\mathbf{C}$.
Since the characteristic of $\C(c)$ is $0$, any polynomial  $f\in\mathbf{C}[z]$ induces a finite Galois extension $\C(c)(f)$ over $\C(c)$ (see \cite[Thm. 3.2.6, 2.7.14]{W}), where $\C(c)(f)$ is the splitting field of $f$, and hence a Galois group $G(f):=\on{Gal}(\C(c)(f)/\C(c))$.
 In particular, we denote the Galois group of $Q_{n,p}$ by $G_{n,p}$.

For each $n\geq0, p\geq1$, denote $\mathfrak{R}_{n,p}$ the set of roots of $Q_{n,p}\in\mathbf{C}[z]$. By (\ref{induction}), we have $f_c(\mathfrak{R}_{n,p})=\mathfrak{R}_{n-1,p}$ if $n\geq1$ and $f_c(\mathfrak{R}_{0,p})=\mathfrak{R}_{0,p}$. Let us consider
 $$\mathfrak{R}_{\leq n,p}:=\bigcup_{0\leq l\leq n}\mathfrak{R}_{l,p}.$$
Then $f_c(\mathfrak{R}_{\leq n,p})\subset \mathfrak{R}_{\leq n,p}$ and the action of $f_c$ induces  a directed graph structure consisting of a certain number of disjoint cycles of order $p$, on each vertex of which is attached a tree of height $n$. More precisely, for each $0\leq l\leq n$, we consider the roots in $\mathfrak{R}_{l,p}$ as the vertices of level $l$, and two vertices $\Delta_1,\Delta_2\in \mathfrak{R}_{\leq n,p}$ are connected by an oriented edge from $\Delta_1$ to $\Delta_2$ if $f_c(\Delta_1)=\Delta_2$. Thus $\mathfrak{R}_{\leq n,p}$ has a graph structure, and we denote this graph by $\mathfrak{R}_{\leq n,p}^T$ (see Figure 1).

{\noindent\bf Example.} For $d=3,p=4,n=2$, the directed graph $\mathfrak{R}_{\leq 2,4}^T$ has $18$ connected component, which are pairwise isomorphic. We draw one in the following.
\begin{figure}[htbp]\centering\label{pic1}
\includegraphics[width=14cm]{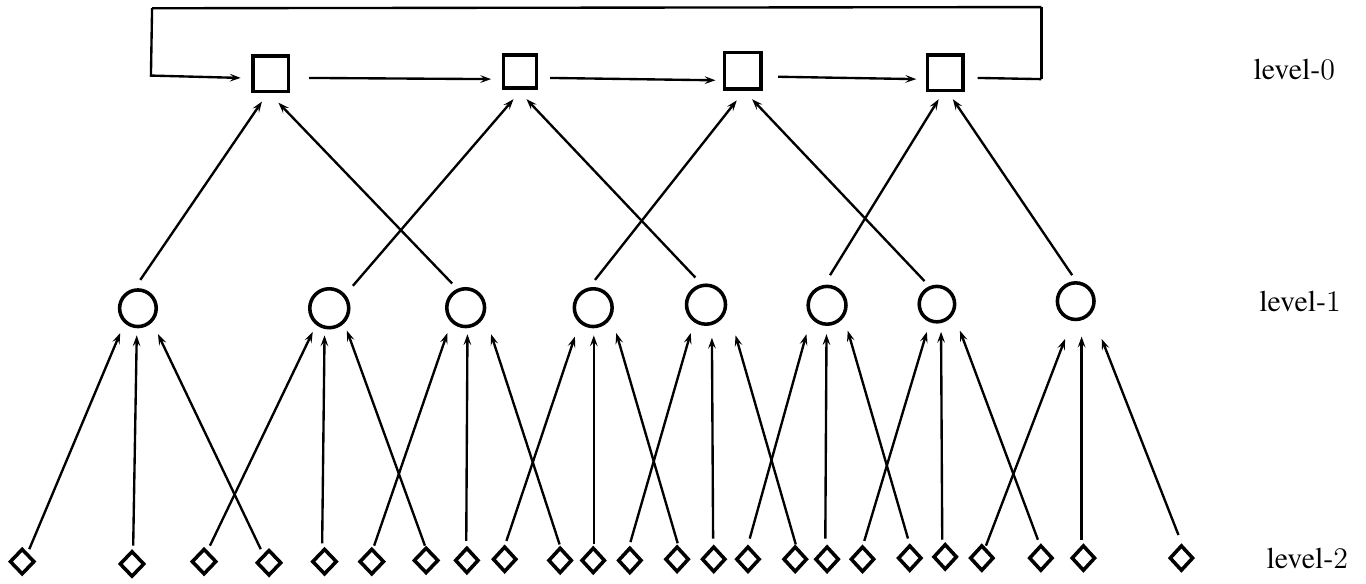}
\begin{center}{\sf Figure 1. A connected component of $\mathfrak{R}_{\leq 2,4}^T$.}
\end{center}
\end{figure}

On the other hand, note that $\mathfrak{R}_{\leq n,p}$ is the set of roots of
$$Q_{\leq n,p}:=\prod_{l=0}^n Q_{l,p}\in\mathbf{C}[z].$$
So, correspondingly, we consider the Galois group $G_{\leq n,p}$ of $Q_{\leq n,p}$.
Firstly, we have the following simple result.
\begin{proposition}\label{equal}
For each $n\geq 0,p\geq 1$, we have $G_{n,p}=G_{\leq n,p}$. 
\end{proposition}
\begin{proof}
By (\ref{induction}), any root of $Q_{l,p}\in \mathbf{C}[z]$ ($0\leq l\leq n$) can be written as a polynomial  with coefficients in $\mathbf{C}$ of roots of $Q_{n,p}$. It follows that the splitting field of $Q_{\leq n,p}=\Pi_{l=0}^nQ_{l,p}$ over $\C(c)$ is the same as that of $Q_{n,p}$ over $\C(c)$. Hence $G_{\leq n,p}=G_{n,p}$.
\end{proof}

By this proposition, computing the Galois group $G_{n,p}$ is equivalent to computing the group $G_{\leq n,p}$. Let $\sigma$ be an element in $G_{\leq n,p}$. Since it fixes the base field $\C(c)$ pointwise, we have  $\sigma(\mathfrak{R}_{l,p})=\mathfrak{R}_{l,p}$ and $f_c\circ\sigma=\sigma\circ f_c$.  Hence $\sigma$ induces an {\it automorphism} of the graph $\mathfrak{R}_{\leq n,p}^T$, i.e., $\sigma$ is a permutation on each $l$-level vertices of $\mathfrak{R}_{\leq n,p}^T$, and $\Delta_1,\Delta_2\in \mathfrak{R}_{\leq n,p}$ are connected by an edge from $\Delta_1$ to $\Delta_2$ if and only if $\sigma(\Delta_1),\sigma(\Delta_2)$ are connected by an edge from $\sigma(\Delta_1)$ to $\sigma(\Delta_2)$. Clearly, different elements of $G_{\leq n,p}$ induce different automorphisms of $\mathfrak{R}_{\leq n,p}^T$. So $G_{\leq n,p}$ can be seen as a subgroup of $\on{Aut}(\mathfrak{R}_{\leq n,p}^T)$, the automorphic group of the graph $\mathfrak{R}_{\leq n,p}^T$.

In  the case  $d=2$, Bousch \cite{B} proved that $$G_{\leq n,p}\simeq \on{Aut}(\mathfrak{R}_{\leq n,p}^T)\simeq H_{\leq n,p}(f_c),$$ where $H_{\leq n,p}(f_c)$ denotes the set of all permutations on $\mathfrak{R}_{\leq n,p}$ that commute with $f_c$.
In the general case, the result is similar but needs  a small modification. We will exhibit this point in the following.

Let $\sigma$ be an element of the Galois group $G_{\leq n,p}$.  As $\sigma$ fixes the field $\C(c)$ pointwise,  it must satisfy the following two conditions:
\begin{description}
\item [(P1)]  $\sigma$ commutes with $f_c$, \quad i.e., $\sigma\circ f_c=f_c\circ \sigma$.
\item [(P2)] $\sigma$ commutes with the rotation of argument $1/d$. That is,  if $\sigma(\Delta)=\widetilde{\Delta}$ for $\Delta,\widetilde{\Delta}\in \mathfrak{R}_{\leq n,p}$, then
$\sigma(\omega^j\Delta)=\omega^j\widetilde{\Delta}, \text{where }\omega=e^{\frac{2\pi i}{d}}\text{ and }1\leq j\leq d-1$

\end{description}
Therefore, if a permutation on $\mathfrak{R}_{\leq n,p}$ wants to be a candidate of elements in the Galois group $G_{\leq n,p}$, it should satisfy the conditions (P1) and (P2).

In fact, in the case of $d=2$, the condition (P1) implies (P2). To see this, let $\Delta_{n-1}$ be a root of $Q_{n-1,p}$ ($n\geq1$) and   $\Delta_n,-\Delta_n$ be the preimages of $\Delta_{n-1}$ under $f_c$. Let $\sigma$ be an element of $G_{\leq n,p}$ and  set $\widetilde{\Delta}_n:=\sigma(\Delta_n)$. By condition (P1) $\sigma$ must map $-\Delta_n$ to $-\widetilde{\Delta}_n$, then the condition (P2) holds. Therefore, it is possible for (P1)  to be a sufficient condition for a permutation on $\mathfrak{R}_{\leq n,p}$ to be an element of $G_{\leq n,p}$, and Bousch \cite{B} proved this point.

However, the situation has a little difference in the case of $d\geq3$.  Following the notations $\Delta_{n-1}, \Delta_n, \widetilde{\Delta}_n$ and $\sigma$ as above. In this case, $\Delta_{n-1}$  has at least $3$ preimages, which are $\Delta_n,\omega\Delta_n,\ldots,\omega^{d-1}\Delta_n$.  By condition (P1), we only know that $\sigma$ maps $\{\omega\Delta_n,\ldots,\omega^{d-1}\Delta_n\}$ bijectively to $\{\omega\widetilde{\Delta}_n,\ldots,\omega^{d-1}\widetilde{\Delta}_n\}$, but can not get $\sigma(\omega^j\Delta_n)=\omega^j(\widetilde{\Delta}_n)$ for $1\leq j\leq d-1$. So, in case of $d\geq3$, the condition (P2) can not be omitted.

What we would like to prove is that, except (P1) and (P2), no other restrictions are imposed on $G_{\leq n,p}$. The proof is  similar to that of Theorem $4$ in Chapter III of $\cite{B}$.
\begin{theorem}\label{Galois}
The Galois group $G_{\leq n,p}$ consists of all permutations on $\mathfrak{R}_{\leq n,p}$ which commute with $f_c$ and the rotation of argument $1/d$.
\end{theorem}
\begin{proof}
We denote $r_d$ the rotation of argument $1/d$, and $H_{\leq n,p}(f_c,r_d)$ the set of permutations on $\mathfrak{R}_{\leq n,p}$  which commute with $f_c$ and $r_d$. By the definition, it is not difficult to check that  $H_{\leq n,p}(f_c,r_d)$ leaves  each $\mathfrak{R}_{l,p}$, and hence $\mathfrak{R}_{\leq l,p}$ invariant for $0\leq l\leq n$.

Define a group homomorphism $$\phi_n:G_{\leq n,p}\to H_{\leq n,p}(f_c,r_d)$$ such that $\phi_n(\sigma)$ is the restriction of $\sigma$ to $\mathfrak{R}_{\leq n,p}$. According to the discussion above, we just need to prove the surjectivity of $\phi_n$.

Note first that the result is true for $n=0$ following 6 of Lemma \ref{period}.

 For $n=1$, since $H_{\leq 1,p}(f_c,r_d)$ leaves $\mathfrak{R}_{0,p}$ invariant, there is a natural homomorphism from $H_{\leq 1,p}(f_c,r_d)$ to $H_{\leq 0,p}(f_c,r_d)$ with $\widetilde{\tau}\mapsto \widetilde{\tau}|_{\mathfrak{R}_{0,p}}$. This homomorphism has an inversion which maps $\tau\in H_{\leq 0,p}(f_c,r_d)$ to $\widetilde{\tau}\in H_{\leq 1,p}(f_c,r_d)$ such that $\widetilde{\tau}|_{\mathfrak{R}_{0,p}}=\tau$ and $\widetilde{\tau}(\omega^j\Delta)=\omega^j\tau(\Delta)$ for each $\Delta\in \mathfrak{R}_{0,p},j\in[1,d-1]$. Thus $H_{\leq 1,p}(f_c,r_d)\simeq H_{\leq 0,p}(f_c,r_d)$. Note that
 $G_{1,p}=G_{0,p}$ (because the splitting fields of $Q_{0,p}$ and $Q_{1,p}$ over $\C(c)$ coincide), then the result is true for $n=1$.

 Now we argue by induction on $n$. Assume that $\phi_{n-1}:G_{\leq n-1,p}\to H_{\leq n-1,p}(f_c,r_d)$ is surjective ($n\geq2$).

Let $\tau\in H_{\leq n,p}(f_c,r_d)$. As $\tau$ commutes with $f_c$, it leaves $\mathfrak{R}_{\leq n-1,p}$ invariant. Then $\tau|_{n-1}$, the restriction of $\tau$ on $\mathfrak{R}_{\leq n-1,p}$, belongs to $ H_{\leq n-1,p}(f_c,r_d)$. By the assumption of induction, there is an element $\sigma_{n-1}$ of $G_{\leq n-1,p}$ with $\phi_{n-1}(\sigma_{n-1})=\tau|_{n-1}$. According to the Galois theory (see \cite[Thm. 2.88]{W}), we can find an element $\sigma$ of $G_{\leq n,p}$ such that its restriction on the splitting field of $Q_{\leq n-1,p}$ over $\C(c)$ coincides with $\sigma_{n-1}$. Set $\tau':=\tau\cdot \phi_n(\sigma)^{-1}$, then $\tau'\in H_{\leq n,p}(f_c,r_d)$ and  it fixes $\mathfrak{R}_{\leq n-1,p}$ pointwise. Now it remains to prove that $G_{\leq n,p}$ contains $\tau'$, i.e., there exists $\sigma'\in G_{\leq n,p}$ with $\phi_n(\sigma')=\tau'$, because if so, $\tau=\phi_n(\sigma')\phi_n(\sigma)=\phi_n(\sigma'\sigma)$, which implies $\phi_n$ is surjective.

Set $\kappa_l:=\nu_d(p)(d-1)d^{l-1}$ for each $l\geq 1$ (which is the number of roots of $Q_{l,p}$), and denote $$\mathfrak{R}_{n,p}=\{\Delta_n^i,\omega \Delta_n^i,\ldots,\omega^{d-1}\Delta_n^i;\}_{i=1}^{\kappa_{n-1}}$$
Since $\tau'$ fixes $\mathfrak{R}_{\leq n-1,p}$ pointwise and commutes with both $f_c,r_d$,  it can be expressed as a product
\[\tau'=\prod_{i=1}^{\kappa_{n-1}}\bigl(s_i,s_i+1,\cdots,d-1,0,\cdots,s_i-1\bigr),\]
where $(s_i,s_i+1,\cdots,d-1,0,\cdots,s_i-1)$ is the cyclic permutation on $(\Delta_n^i,\ldots,\omega^{d-1}\Delta_n^i)$ mapping $\Delta_n^i$ to $\omega^{{s_i}}\Delta_n^i$ and so on. Notice that all these cyclic permutations $(s_i,\ldots,s_i-1)$ are commutable.

The argument in this paragraph is a classical correspondence between the Galois theory and the covering theory; see \cite{Z} for the detail.
Let $V_{n,p}$ be the set of singular values of the projection $\pi: \mathcal{X}_{n,p}\to \C$.
Then $\pi_{n,p}$ restricts to a cover from the complement of the preimage of $V_{n,p}$ in $\mathcal{X}_{n,p}$ to the complement of $V_{n,p}$ in $\C$.
For all $c_0$ not in $V_{n,p}$, there is thus an action of $\pi_1(\C\setminus V_{n,p},c_0)$ on the roots
 $$Z_{n,p}=\{z_n^i,\ldots,\omega^{d-1}z_n^i\}_{i=1}^{\kappa_{n-1}}$$
 of $Q_{n,p}(c_0,z)$, which is seen as a polynomial in the variable $z$ and with complex coefficients. By the correspondence between  the Galois theory and the covering theory (see \cite[Thm. 1]{Z}),
there is a (non-unique) choice of bijection between the roots of $Q_{n,p}\in\mathbf{C}[z]$ and the roots of $Q_{n,p}(c_0,z)\in \C[z]$ such that the set of permutations induced by the Galois group on the set  $\mathfrak{R}_{\leq n,p}$ is conjugated by this bijection to the set of permutations on $Z_{n,p}$ induced by $\pi_1(\C\setminus V_{n,p},c_0)$. Thus we get a surjective (not injective, usually) morphism from $\pi_1(\C\setminus V_{n,p},c_0)$ to the Galois group.
Moreover, this bijection is such that any polynomial relation between the $\Delta_n^i$ with coefficient in $\C(c)$ will give a relation between the $z_n^i$, with $c_0$ being substituted to $c$. It implies that the action of $\pi_1(\C\setminus V_{n,p},c_0)$ on $Z_{n,p}$ preserves commutation with multiplication by $\omega$.

Therefore, by the discussion above,
to obtain the required permutation $\tau'$, we only need to find a path  in the basic group  $\pi_1(\C\setminus V_{n,p},c_0)$  such that its monodromy action on
$\{(z_n^i,\ldots,\omega^{d-1}z_n^i)\}_{i=1}^{\kappa_{n-1}}$ induces the same permutation as $\tau'$.  We now show the following result, which is sufficient: for any $i\in[1,\kappa_{n-1}]$, there exists a path in  $\pi_1(\C\setminus V_{n,p},c_0)$ such that its monodromy action induces the permutation $(s_i,s_i+1,\cdots,s_i-1)$.

Fix any $i\in[1,\kappa_{n-1}]$. Suppose that $\{(c_0,z_n^i),\ (c_0,\omega z_n^i),\ldots,(c_0,\omega^{d-1}z_n^i) \}$ belong to $\VVV_{n,p}^{t}$. Let $\hat{c}$ be an $(n,p)$-Misiurewicz parameter with $(\hat{c},0)\in \VVV_{n,p}^t$.  Such $\hat{c}$ exists because the set $D_{n,p}^t(\text{Misiurewicz})$ is non-empty (see section 5.2 item $2$). By (\ref{induction}), $(\hat{c},\hat{c})\in\mathcal{X}_{n-1,p}$. Since $\hat{c}$ is a Misiurewicz parameter and the orbit of $\hat{c}$ does not contain $0$, then the point $(\hat{c},\hat{c})$ belongs to no sets in Lemma \ref{critical} in the case $l=n-1$. Hence $w=\hat{c}$ is a simple root of the equation $Q_{n-1,p}(\hat{c},w)=0$ (in $w$). So by the Implicit Function Theorem, the equation $Q_{n-1,p}(c,w)=0$ admits a unique solution $w=w(c)$ close to $\hat{c}$ fullfilling $w(\hat{c})=\hat{c}$. Thus, a neighborhood of $(\hat{c},0)$ in $\mathcal{X}_{n,p}$ can be written as
\[\bigl\{(c,z_c)\cup(c,\omega z_c)\cup\cdots\cup(c,\omega^{d-1}z_c)\mid |c-\hat{c}|<\epsilon\bigr\}\]
where $z_c$ is one of the preimages of $w(c)$ under $f_c$ nearby $0$.

When $c$ makes a small turn around $\hat{c}$, the set $\{z_c,\ldots, \omega^{d-1}z_c\}$ gets a cyclic permutation with $\omega^jz_c$ mapped to $\omega^{j+1}z_c$, because $\pi_{n,p}$ is a degree $d$ covering in a punctured neighborhood of $(\hat{c},0)$ (which is shown in \S 5.2 item $2$), and the other $(n,p)$-preperiodic points of $f_c$  remain fixed,  since $\pi_{n,p}$ is injective around each point $(\hat{c},\xi)$ with $\xi$ a non-zero $(n,p)$-preperiodic point of $f_{\hat{c}}$. So if we choose a path $\gamma\in\pi_1(\C\setminus V_{n,p},c_0)$ homotopic to  $\hat{c}$,  the induced permutation by its monodromy action gives the cyclic permutation $(2,\ldots,d,1)$ on
 $(z_n^*,\ldots,\omega^{d-1}z_n^*)$ for a $(n,p)$-preperiodic point $z_n^*$ of $f_{c_0}$ fullfilling $(c_0,z_n^*)\in \VVV_{n,p}^{t}$, and keeps the other $(n,p)$-preperiodic points of $f_{c_0}$ fixed. Now we join $(c_0,z_n^i)$ and $(c_0,z_n^*)$ by a curve from $(c_0,z_n^i)$ to $(c_0,z_n^*)$ in $\VVV_{n,p}^{t}\setminus\pi_{n,p}^{-1}(V_{n,p})$, and denote  its projection under $\pi_{n,p}$ by $\beta$. Then $\beta\in\pi_1(\C\setminus V_{n,p},c_0)$ and the path $\beta\cdot\gamma^{s_i}\cdot\beta^{-1}$ is what we expect.
\end{proof}

Applying this theorem, we can also characterize the Galois group $G_{\leq n,p}$ by the automorphisms of the directed graph $\mathfrak{R}_{\leq n,p}^T$, as in the $d=2$ case. For $d\geq 3$, denote by
$\on{Aut}(\mathfrak{R}_{n,p}^T,r_d)$ the set of automorphisms of $\mathfrak{R}_{\leq n,p}^T$ that commute with the rotation of argument $1/d$, and by  $H_{\leq n,p}(f_c,r_d)$ the set of permutations  on $\mathfrak{R}_{\leq n,p}$  that commute with $f_c$ and the rotation of argument $1/d$.
\begin{corollary}
For $n\geq 0,p\geq1$, $G_{\leq n,p}\simeq \on{Aut}(\mathfrak{R}_{n,p}^T,r_d)\simeq H_{\leq n,p}(f_c,r_d)$.
\end{corollary}

Following Bousch \cite[Chap. 3, III]{B} and Silverman (\cite[$\S$ 3.9]{Sil}), we express the Galois group $G_{n,p}$ ($n\geq2$) as a wreath product.
\begin{definition}
Let $G$ be a group and $\Sigma$ be a subgroup of $\mathbf{S}_m$, where $\mathbf{S}_m$ denotes the set of permutations on $\{1,\ldots,m\}$. Denote by $\Sigma\ltimes G^m$ the wreath product  of $G$ and $\Sigma$. As a set, it consists of $g=\sigma(g_1,\cdots,g_m)$ where $g_i\in G$ and $\sigma\in \Sigma$. The multiplication is defined by
\[g\cdot h=\sigma_g(g_1,\cdots,g_m)\cdot\sigma_h(h_1,\cdots,h_m)=\sigma_g\circ\sigma_h(g_{\sigma_h(1)}\cdot h_1,\cdots,g_{\sigma_h(m)}\cdot h_m).\]
Under this multiplication law, $ \Sigma\ltimes G^m$ is a group with $g^{-1}=\sigma_g^{-1}\bigl(g_{\sigma_g^{-1}(1)}^{-1},\cdots,g_{\sigma_g^{-1}(1)}^{-1}\bigr)$ and unit element
$(1,\ldots,1)$.
\end{definition}

In \cite{B}, Bousch showed that the Galois group $G_{0,p}$ is isomorphic to the wreath product $ \mathbf{S}_{\nu_d(p)/p}\ltimes (\Z/p\Z)^{\nu_d(p)/p}$ (see also \cite[$\S$ 3.9]{Sil}).
From the proof of Theorem \ref{Galois}, we have seen that $G_{1,p}=G_{0,p}$, so $$G_{1,p}\simeq \mathbf{S}_{\nu_d(p)/p}\ltimes (\Z/p\Z)^{\nu_d(p)/p}.$$ For $n\geq 2$, we can give inductively an isomorphic model of $G_{n,p}$ by a wreath product. Recall that $\kappa_n=\nu_d(p)(d-1)d^{n-1}$ ($n\geq 1$) is the number of roots of $Q_{n,p}$.
\begin{proposition}\label{wreath}
For $n\geq2$, we have $G_{n,p}\cong G_{n-1,p}\ltimes (\Z/d\Z)^{\kappa_{n-1}}$,  where the action of $G_{n-1,p}$ on $(1,2,..., \kappa_{n-1})$ comes from the action of $G_{n-1,p}$ on the roots of $Q_{n-1,p}$, of which there are exactly $\kappa_{n-1}$.

\end{proposition}
\begin{proof}
 For $n\geq2$, we denote $(\Delta_{n-1}^i)_{i=1}^{\kappa_{n-1}}$ the roots of $Q_{n-1,p}\in\mathbf{C}[z]$, and denote $$(\ \{\Delta_n^i,\ldots,\omega^{d-1}\Delta_n^i\}\ )_{i=1}^{\kappa_{n-1}}$$
the roots of $Q_{n,p}$ such that $f_c(\Delta_n^i)=\Delta_{n-1}^i$.
\begin{center}
\begin{tabular}{c|c|c|c}
$\Delta_{n-1}^1$&$\Delta_{n-1}^2$&$\cdots\cdots$&$\Delta^{\kappa_{n-1}}_{n-1}$\\[5pt]
\hline
$\Delta^1_n$&$\Delta_n^2$&$\cdots\cdots$&$\Delta^{\kappa_{n-1}}_n$\\
$\omega \Delta^1_n$&$\omega \Delta_n^2$&$\cdots\cdots$&$\omega \Delta^{\kappa_{n-1}}_n$\\
$\vdots$&$\vdots$& &$\vdots$\\
$\omega^{d-1} \Delta^1_n$&$\omega^{d-1}\Delta_n^2$&$\cdots\cdots$&$\omega^{d-1} \Delta^{\kappa_{n-1}}_n$
\end{tabular}
\end{center}
We define a group homomorphism
$$W:G_{n,p}\longrightarrow G_{n-1,p}\ltimes(\Z/d\Z)^{\kappa_{n-1}}$$ such that $W(\sigma)=\sigma|_{n-1}(s_1,\ldots,s_{\kappa_{n-1}})$, where $\sigma|_{n-1}$ is the restriction of $\sigma$ on the splitting field of $Q_{n-1,p}$ over $\C(c)$, and the $i$-th digit in $(s_1,\ldots,s_{\kappa_{n-1}})$ is $s_i$ if and only if  once $\sigma(\Delta_{n-1}^i)=\Delta_{n-1}^t$ for some $1\leq t\leq \kappa_{n-1}$, then $\sigma(\Delta_n^i)=\omega^{s_i}\Delta_n^{t}$.
The injectivity of $W$ is straightforward by the action of $G_{n,p}$ on $\mathfrak{R}_{\leq n,p}$ and the subjectivity of $W$ is due to Theorem \ref{Galois}.
\end{proof}

Before ending this section, we give a computation of $G_{n,p}$ for some small $n,p$. Note that although $G_{1,p}$ is isomorphic to a subgroup $\mathbf{S}_{\nu_d(p)/p}\ltimes (\Z/p\Z)^{\nu_d(p)/p}$ of $\mathbf{S}_{\nu_d(p)}$, it is indeed a subgroup of $\mathbf{S}_{\nu_d(p)(d-1)}$. So mimicking the action of $G_{1,p}$ on $$\{\omega\Delta_1^1,\ldots,\omega\Delta_1^{\nu_d(p)};\ \ldots;\ \omega^{d-1}\Delta_1^1,\ldots,\omega^{d-1}\Delta_1^{\nu_d(p)}\},$$
we define a subgroup $\mathbf{P}_{\nu_d(p)(d-1),d}$ of $\mathbf{S}_{\nu_d(p)(d-1)}$ such that $\tau\in \mathbf{P}_{\nu_d(p)(d-1),d}$ if and only if
\begin{eqnarray*}
&&\tau\big(1,\ldots,\nu_d(p);\ \ldots\ ;(d-2)\nu_d(p)+1,\ldots,(d-2)\nu_d(p)+\nu_d(p)\big)\\
&=&\big(\sigma(1),\ldots,\sigma(\nu_d(p));\ \ldots\ ;(d-2)\nu_d(p)+\sigma(1),\ldots,(d-2)\nu_d(p)+\sigma(\nu_d(p))\big)
\end{eqnarray*}
for a  $\sigma\in \mathbf{S}_{\nu_d(p)}$. Then $\mathbf{P}_{\nu_d(p)(d-1),d}\simeq \mathbf{S}_{\nu_d(p)}\simeq G_{1,p}$, and $\mathbf{P}_{\nu_d(p)(d-1),d}= \mathbf{S}_{\nu_d(p)}$ in  the case $d=2$. The results of computation are listed in the following table.
\begin{center}
\begin{tabular}{|c|c|c|c|c|c|}
\hline
\ \ $d$\ \ &\ \ $n$\ \ &\ \ $p$\ \ &\ $\nu_d(p)$\ &\ $\kappa_{n-1}$\ &\ $G_{n,p}(d)$\ \\\hline
3&1&1&3&--&$\mathbf{S}_{3}\simeq\mathbf{P}_{6,3}$ \\\hline
3&2&1&3&6&$\mathbf{P}_{6,3}\ltimes(\Z/3\Z)^6$\\\hline
3&3&1&3&18&($\mathbf{P}_{6,3}\ltimes(\Z/3\Z)^6)\ltimes(\Z/3\Z)^{18}$\\\hline
2&3&2&2&4&$((\Z/2\Z)\ltimes(\Z/2\Z)^2)\ltimes(\Z/2\Z)^4$\\\hline
2&2&3&6&6&$((\Z/2\Z)\ltimes(\Z/3\Z)^2)\ltimes(\Z/2\Z)^6$\\\hline
\end{tabular}
\end{center}

\end{document}